\documentclass[11pt]{amsart}
\usepackage{amsmath}
\usepackage{amsxtra}
\usepackage{amscd}
\usepackage{amsthm}
\usepackage{amsfonts}
\usepackage{amssymb}
\usepackage{eucal} 
\usepackage{url}
\usepackage{setspace}  
\input xy
\xyoption{all}

\newtheorem{thm}{Theorem}
\newtheorem{cor}[thm]{Corollary}
\newtheorem{lem}[thm]{Lemma}
\newtheorem{prop}[thm]{Proposition}

\theoremstyle{remark}
\newtheorem{rem}[thm]{Remark}
\newtheorem{exam}[thm]{Example}

\theoremstyle{definition}
\newtheorem{defn}[thm]{Definition}

\newcommand\nc{\newcommand}
\nc\on{\operatorname}

\newcommand\fp{{\mathfrak p}}
\newcommand\fq{{\mathfrak q}}
\newcommand\fm{{\mathfrak m}}

\newcommand\mcl{\mathcal}
\newcommand\mbb{\mathbb}

\nc\Hom{\on{Hom}}
\nc\Sym{\on{Sym}}
\nc\Spec{\on{Spec}}
\nc\Specm{\on{Specm}}
\nc{\dfp}{\overset{\cdot}{\fp}}
\nc{\dfq}{\overset{\cdot}{\fq}}
\nc{\dfm}{\overset{\cdot}{\fm}}

\begin{document}

\title{Zeta Functions of Curves With No Rational Points}
\author{Daniel Litt}

\begin{abstract}
We show that the motivic zeta functions of smooth, geometrically connected curves with no rational points are rational functions.  This was previously known only for curves whose smooth projective models have a rational point on each connected component.  In the course of the proof we study the class of a Severi-Brauer scheme over a general base in the Grothendieck ring of varieties.  
\end{abstract}

\maketitle

\section{Introduction}\label{Introduction}
Let $k$ be a field, and $K_0(\on{Var}_k)$ the Grothendieck ring of varieties over $k$.  This is the free abelian group on isomorphism classes $[X]$ of separated, finite type $k$-schemes (varieties), subject to the following relation:  $$[X]=[Y]+[X\setminus Y] \text{ for $Y\hookrightarrow X$ a closed embedding}.$$
Multiplication is given by $$[X]\cdot [Y]=[X\times Y]$$ on classes of varieties, and extended linearly.  This ring was introduced by Grothendieck \cite[(Letter of August 16, 1964)]{Groth} in a letter to Serre.  The Grothendieck ring of varieties is the universal ring through which all ``motivic" invariants factor (e.g.~Euler characteristic with compact support, Hodge-Deligne polynomial, virtual Chow motive, etc.).  

Let $\mathbb{L}:=[\mathbb{A}^1]\in K_0(\on{Var}_k)$ be the class of the affine line.  
\begin{exam}
Using the fact that $\mbb{P}^n=\on{pt}\cup~\mbb{A}^1 \cup \mbb{A}^2\cup \cdots \cup \mbb{A}^n$, we have $$[\mbb{P}^n]=1+\mbb{L}+\cdots +\mbb{L}^n.$$
\end{exam}

In \cite[1.3]{kapranov}, Kapranov introduces for each variety $X/k$ a motivic zeta function $Z_X(t)$.
\begin{defn}[Kapranov motivic zeta function]\label{kapranovzeta}
Let $X$ be a $k$-variety.  Then the motivic zeta function $Z_X(t)\in K_0(\on{Var}_k)[[t]]$ is
$$Z_X(t):=\sum_{n=0}^\infty [\on{Sym}^n(X)]t^n.$$  
\end{defn}
\begin{rem}\label{mult}
The motivic zeta function is a homomorphism $$K_0(\on{Var}_k)\to 1+tK_0(\on{Var}_k)[[t]];$$ that is, if $[X]=[Y]+[W]$, then $Z_X(t)=Z_Y(t)\cdot Z_W(t)$.
\end{rem}
\begin{exam}[{\cite[Corollary 3.6]{rationalitycriteria}}]\label{Pn}
$$Z_{\mbb{P}^n}(t)=\frac{1}{(1-t)(1-\mbb{L}t)\cdots (1-\mbb{L}^nt)}.$$
\end{exam}
If $k=\mathbb{F}_q$ is a finite field, $Z_X(t)$ is an analogue of the Weil zeta function $$\zeta_X(t):=\exp\left(\sum_{k=1}^\infty \frac{\#|X(\mathbb{F}_{q^k})|}{k}t^k\right)= \sum_{n=0}^\infty \#|\on{Sym}^n(X)(\mathbb{F}_q)|t^n. $$  Indeed, in this case there is a natural homomorphism $$\#: K_0(\on{Var}_k)\to \mbb{Z}$$  $$[X]\mapsto \#X(\mbb{F}_q);$$ and $\#(Z_X(t))=\zeta_X(t).$ Kapranov shows

\begin{prop}[{\cite[1.1.9]{kapranov}}, {\cite[Theorem 7.33]{mustata}}] \label{rationalitywithpoints} Let $k$ be a field and $C/k$ a smooth, geometrically connected, projective curve of genus $g$ with $C(k)\not=\emptyset$.  Then $$Z_C(t)(1-t)(1-\mathbb{L}t)\in K_0(\on{Var}_k)[[t]]$$
 is a polynomial of degree $2g$.
\end{prop}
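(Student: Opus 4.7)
The plan is to realize $[\on{Sym}^n(C)]$ as $[\on{Jac}(C)]\cdot[\mathbb{P}^{n-g}]$ for all sufficiently large $n$, and then sum the resulting geometric tail in closed form. Write $J:=\on{Jac}(C)$. The rational point $p_0\in C(k)$ plays two distinct roles: it trivializes the $J$-torsor $\on{Pic}^n(C)$ for every $n$, giving canonical isomorphisms $\on{Pic}^n(C)\cong J$; and it provides a rigidification that produces a genuine Poincaré line bundle $\mathcal{P}$ on $C\times_k J$, not merely one defined \'etale-locally.

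Let $\pi:C\times_k J\to J$ be the second projection and $\mathcal{P}_n$ the translate of $\mathcal{P}$ classifying degree-$n$ line bundles. For $n\geq 2g-1$, Riemann-Roch gives $h^1(L)=0$ for every degree-$n$ line bundle $L$, so by cohomology and base change $R^1\pi_*\mathcal{P}_n=0$ and $\pi_*\mathcal{P}_n$ is locally free of rank $n-g+1$ on $J$. The Abel-Jacobi map
\[
\sigma_n:\on{Sym}^n(C)\to J,\qquad p_1+\cdots+p_n\mapsto \mathcal{O}_C(p_1+\cdots+p_n),
\]
is then canonically identified with $\mathbb{P}(\pi_*\mathcal{P}_n)\to J$ (both represent the functor of a degree-$n$ line bundle equipped with a nonzero section up to scaling), and in particular is a \emph{Zariski-locally trivial} $\mathbb{P}^{n-g}$-bundle. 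Passing to an affine open cover of $J$ that trivializes it and applying the scissor and product relations in $K_0(\on{Var}_k)$ yields
\[
[\on{Sym}^n(C)]=[J]\cdot[\mathbb{P}^{n-g}]\qquad\text{for all } n\geq 2g-1.
\]

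To finish, split the generating function as
\[
Z_C(t) = \sum_{n=0}^{2g-2}[\on{Sym}^n(C)]\,t^n + [J]\sum_{n\geq 2g-1}[\mathbb{P}^{n-g}]\,t^n.
\]
The first (polynomial) piece, multiplied by $(1-t)(1-\mathbb{L}t)$, is a polynomial of degree at most $2g$. For the second, expand $[\mathbb{P}^{n-g}]=(\mathbb{L}^{n-g+1}-1)/(\mathbb{L}-1)$ and sum two geometric series; a short computation gives
\[
(1-t)(1-\mathbb{L}t)\cdot[J]\sum_{n\geq 2g-1}[\mathbb{P}^{n-g}]\,t^n \;=\; [J]\bigl(t^{2g-1}[\mathbb{P}^{g-1}] - \mathbb{L}\,t^{2g}\,[\mathbb{P}^{g-2}]\bigr),
\]
a polynomial of degree $2g$. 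Adding the two contributions produces a polynomial of degree $2g$ in $K_0(\on{Var}_k)[t]$, as claimed.

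The principal obstacle is the Zariski-local triviality of $\sigma_n$, which is exactly what the hypothesis $C(k)\neq\emptyset$ supplies: without $p_0$, the universal line bundle on $C\times_k \on{Pic}^n_{C/k}$ only exists \'etale-locally, so $\sigma_n$ becomes an \'etale-locally trivial projective bundle, i.e.\ a nontrivial Severi-Brauer scheme over $\on{Pic}^n(C)$, whose class in $K_0(\on{Var}_k)$ need not split as $[J]\cdot[\mathbb{P}^{n-g}]$. Analyzing that Severi-Brauer class is precisely the subject of the remainder of the paper.
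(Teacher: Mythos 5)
Your proof follows essentially the same route as the paper's: use the rational point to identify $\on{Pic}^n(C)\cong\on{Jac}(C)$ for all $n$ and to realize the Abel--Jacobi map as a Zariski-locally trivial $\mathbb{P}^{n-g}$-bundle for $n\geq 2g-1$, then sum the geometric tail. The only difference is one of exposition: the paper cites Schwarzenberger's theorem as a black box for the projective-bundle structure, whereas you unpack it via the Poincar\'e bundle $\mathcal{P}$ on $C\times_k J$, Riemann--Roch vanishing of $h^1$, and cohomology-and-base-change to get local freeness of $\pi_*\mathcal{P}_n$; you also write out the closed form $[J]\bigl(t^{2g-1}[\mathbb{P}^{g-1}]-\mathbb{L}t^{2g}[\mathbb{P}^{g-2}]\bigr)$ for the tail rather than merely asserting polynomiality. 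Both are correct; yours is a more self-contained version of the same argument.
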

This result is analogous to (and implies) the rationality of the Weil zeta function of $C$, if $k$ is a finite field, by applying $\#(-)$.
\begin{rem}
Kapranov speculates that $Z_X(t)$ may be a rational function for arbitrary $k$-varieties $X$ \cite[Remark 1.3.5(b)]{kapranov}.  If $k$ is finite, such a result would give a geometric explanation for the rationality of the Weil zeta function $\zeta_X$.  However, Larsen and Lunts show that for $k=\mbb{C}$ and $X$ is a surface with Kodaira dimension different from $-\infty$, $Z_X(t)$ is not rational \cite[Theorem 1.1]{rationalitycriteria}.  The problem of finding a natural quotient of $K_0(\on{Var}_k)$ (through which ``motivic" invariants still factor) over which $Z_X(t)$ becomes rational is of some importance.
\end{rem}
Kapranov remarks that $Z_C(t)$ is still a rational function if $C(k)=\emptyset$; however a correct proof of this fact has not yet appeared in the literature.  The main reason for writing the present note was to rectify this lack, as the proof is not a triviality. 

Let us briefly review the proof of Proposition \ref{rationalitywithpoints}, and then discuss how it fails if $C(k)=\emptyset.$  
\begin{proof}[Proof of Proposition \ref{rationalitywithpoints}]
Observe that if $C(k)\not=\emptyset$, the Abel-Jacobi map $\on{Sym}^n(C)\to \on{Pic}^n(C)$ is a (Zariski) $\mathbb{P}^{n-g}$-bundle for $n>2g-2$ \cite[Theorem 4]{schwarzenberger}; thus for $n>2g-2$, $$[\on{Sym}^n(C)]=[\mathbb{P}^{n-g}][\on{Pic}^n(C)]=\frac{1-\mathbb{L}^{n-g+1}}{1-\mathbb{L}}[\on{Pic}^n(C)].$$  Furthermore, $\on{Pic}^n(C)\simeq \on{Pic}^0(C)$ for all $n$ (again using the existence of a rational point on $C$).  In particular,
$$Z_C(t) = \sum_{n=0}^{2g-2} [\on{Sym}^n(C)]t^n+[\on{Pic}^0(C)]\sum_{n=2g-1}^\infty \frac{1-\mathbb{L}^{n-g+1}}{1-\mathbb{L}}t^n$$
and thus $$(1-t)(1-\mathbb{L}t)Z_C(t)$$ is a polynomial of degree $2g$.
\end{proof}
\begin{exam}\label{realcurve}
Unfortunately, the first step of this proof breaks if $C(k)=\emptyset$.  For example, consider the curve $X$ in $\mathbb{P}^2_{\mathbb{R}}$ defined in homogeneous coordinates by $$x^2+y^2+z^2=0.$$  It is easy to see that $\on{Pic}^n(X)=\on{Spec}(\mathbb{R})$ for all $n$ (see e.g.~\cite[9.2.4]{FGA explained}), but $\on{Sym}^n(X)$ has no rational points if $n$ is odd.  Thus the Abel-Jacobi morphism $\on{Sym}^n(X)\to \on{Pic}^n(X)$ is not a Zariski $\mathbb{P}^n$-bundle for odd $n$.  
\end{exam}
\begin{rem}
Theorem \ref{maintheorem} of this note implies that in the above example, $(1-\mbb{L}^2t^2)(1-t^2)Z_X(t)$ is a polynomial.  Let us compare this to Remark 1.3.5(a) of \cite{kapranov}.  The remark states that $(1-\mbb{L}^nt^n)(1-t^n)Z_X(t)$ is a polynomial, where $n>0$ is minimal such that $\on{Pic}^n(X)(k)\not=\emptyset$; in the above example $\on{Pic}^1(X)=\on{Spec}(\mbb{R})$, so the remark suggests that $(1-\mbb{L}t)(1-t)Z_X(t)$ is rational.  We do not know a proof of this fact, and do not believe it to be true (though we do not have a proof it is false).  

There is some ambiguity in Remark 1.3.5(a) of \cite{kapranov}, which may allow one to preserve its correctness. In the case $X$ has no rational points, the scheme $\on{Pic}(X)$ represents the fppf sheafification of the functor sending $T$ to the set of isomorphism classes of line bundles on $X\times T$, modulo line bundles pulled back from $T$.  If we take the comment to refer to the Zariski sheafification of this functor instead, the remark has some chance of being true---though again, we do not know a proof.
\end{rem}

The issue identified in Example \ref{realcurve} is that $\on{Sym}^n(C)\to \on{Pic}^n(C)$ may not be a Zariski fiber-bundle.  Of course (if $C$ is geometrically connected), after a finite extension of the base field, we recover the usual situation of a projective space bundle over the $\on{Pic}^n(C)$, so in general $\on{Sym}^n(C)\to\on{Pic}^n(C)$ is a Severi-Brauer scheme over $\on{Pic}^n(C)$.  Thus we will proceed by studying the class $[V]$ of a Severi-Brauer $S$-scheme $V/S$ in $K_0(\on{Var}_k)$.  The main result of this study is a description of the class $[V]\in K_0(\on{Var}_k)$.
\begin{thm}\label{sbresult}
Let $S$ be a $k$-variety and $\alpha\in \on{Br}(S)$ a Brauer class.  Then there is an element $P=P(\alpha, S)\in K_0(\on{Var}_k)$ and an integer $r=r(\alpha, S)$ determined only by $\alpha$ and $S$, such that for any Severi-Brauer $S$-scheme $V$ with Brauer class $\alpha$, $$[V]=P(1+\mbb{L}^r+\mbb{L}^{2r}+\cdots+\mbb{L}^{nr})$$ for some $n$.  
\end{thm}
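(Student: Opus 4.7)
The plan is to fix a Severi-Brauer $S$-scheme $V_0$ of class $\alpha$ of minimum degree $r$, set $P := [V_0]$ and $r(\alpha, S) := r$, and proceed by induction on $n$, where a given Severi-Brauer $V/S$ of class $\alpha$ has degree $(n+1) r$. That every such $V$ has degree divisible by $r$ follows from a point-by-point argument: at each $s \in S$, the fiber $V_s$ is a Severi-Brauer over $k(s)$ whose degree is a multiple of the index of $\alpha_s$, and that index divides $r$. The base case $n = 0$ amounts to showing that any two minimum-degree representatives of $\alpha$ have the same class in $K_0(\on{Var}_k)$, a nontrivial statement that must be established alongside the induction.

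The core geometric input for the inductive step is the following: for $V$ of degree $(n+1)r > r$, I would produce a closed Severi-Brauer subscheme $V' \hookrightarrow V$ of class $\alpha$ and degree $nr$, together with a morphism $V \setminus V' \to V_0$ exhibiting the open complement as a Zariski-locally trivial $\mbb{A}^{nr}$-bundle over $V_0$. Given such a stratification, $[V] = [V'] + \mbb{L}^{nr} [V_0]$, and the inductive hypothesis applied to $V'$ yields the formula. \'Etale-locally on $S$, this is transparent: writing $V \cong SB(M_{n+1}(D))$ for a degree-$r$ Azumaya algebra $D$ representing $\alpha$, the block idempotent $e = \on{diag}(1, \ldots, 1, 0) \in M_{n+1}(D)$ identifies $V'$ with the embedded $SB(M_n(D))$ and produces the linear projection of $V \setminus V'$ onto $SB(D) = V_0$ with fibers $\mbb{A}^{nr}$.

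The main obstacle will be globalizing this local picture: in general, the Azumaya algebra underlying $V$ is a nontrivial twist of $M_{n+1}(D)$, so neither the idempotent $e$ nor the block decomposition is defined on all of $S$. My approach would be to work directly on $V$, where $\alpha|_V = 0$ so that the pulled-back Azumaya algebra splits as $\underline{\on{End}}(E_V)$ for some tautological rank-$(n+1)r$ bundle $E_V$; combining the diagonal inclusion $V \hookrightarrow V \times_S V$ (which yields a universal line quotient of $E_V$) with the pulled-back $D$-action produces a canonical rank-$r$ quotient of $E_V$, whose kernel has rank $nr$ and cuts out the required $V' \subset V$ after a suitable descent. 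Note that naive cancellation from the projective-bundle identity $[V][\mathbb{P}^{r-1}] = [V_0][\mathbb{P}^{(n+1)r - 1}]$, coming from the two fiberings of $V \times_S V_0$, is not available because $[\mathbb{P}^{r-1}]$ is likely a zero divisor in $K_0(\on{Var}_k)$; this is precisely what forces one to construct a genuine geometric stratification of $V$ rather than merely appeal to cancellation on the product.
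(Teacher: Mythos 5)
Your proposed inductive step has a gap that cannot be patched: it requires, for every Severi--Brauer $S$-scheme $V$ of class $\alpha$ and degree $(n+1)r$, a \emph{globally defined} closed Severi--Brauer subscheme $V'\subset V$ over $S$ of degree $nr$.  Equivalently (passing to twisted sheaves), it requires every $\alpha$-twisted vector bundle of rank $(n+1)r$ to admit a rank-$nr$ sub-bundle with locally free quotient.  This already fails in the untwisted case $\alpha=0$: take $S=\mathbb{P}^2$ and $V=\mathbb{P}(T_{\mathbb{P}^2})$.  A closed sub-Severi--Brauer of degree $1$ would be a section of $\mathbb{P}(T_{\mathbb{P}^2})\to\mathbb{P}^2$, i.e.~a line-bundle quotient of $T_{\mathbb{P}^2}$; the Chern class computation $a(-3-a)=c_2(\Omega^1_{\mathbb{P}^2})=3$ has no integer solution $a$, so no such quotient exists.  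Your proposed fix --- pulling everything back to $V$ where $\alpha$ trivializes and using the tautological bundle and diagonal --- produces data living over $V$, not a closed subscheme of $V$ over $S$; the subsequent ``suitable descent'' is exactly the nonexistent thing.  Note the obstruction is not the nontriviality of $\alpha$ (which your proposal identifies as ``the main obstacle'') but the possible indecomposability of the bundle, which is already present for $\alpha=0$.

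The paper avoids this entirely by never asking for a global sub-bundle.  Instead (Proposition \ref{refined9}) it stratifies: over the generic point of each irreducible piece, Artin--Wedderburn (Corollary \ref{artinwedderburn}) decomposes the twisted bundle as $D^{\oplus k}$, this decomposition is spread out to a dense open, and one Noetherian-inducts on the closed complement.  The decomposition formula feeding this (Theorem \ref{motivicdecomposition}) is also local --- one reduces to an integral affine base, where Proposition \ref{semisimple} splits the exact sequence --- rather than a global geometric stratification of $V$.  Independence of the resulting class from the choices (your ``base case'') is handled by Proposition \ref{independence}, again by shrinking to a generic isomorphism and Noetherian induction.  So the architecture is: generic argument plus Noetherian induction on the base, rather than a global induction on degree; that is the essential difference, and it is what makes the argument go through.

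A smaller but also real gap: your claimed divisibility, that every $V$ of class $\alpha$ has degree a multiple of the minimal degree $r$, does not follow from the stated fiberwise argument.  Showing $\mathrm{ind}(\alpha_s)\mid \deg V$ for all $s$ and $\mathrm{ind}(\alpha_s)\mid r$ gives you $\mathrm{ind}(\alpha_s)\mid\gcd(\deg V,r)$, not $r\mid\deg V$.  The paper instead takes $r$ to be the \emph{gcd} of the achieved ranks (a quantity that automatically divides every degree), and then shows on each stratum that the local minimal rank divides this $r$, which is the direction of divisibility the argument actually supplies.
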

See Proposition \ref{refined9} for a refined version of this result.

After giving this description of $[V]$, we will prove the main result of the paper:
\begin{thm}\label{maintheorem}
Let $C$ be a smooth, projective, geometrically connected curve over a field $k$.  Then there exists a polynomial $$p(t)\in 1+tK_0(\on{Var}_k)[t]$$ such that $p(t)Z_C(t)\in K_0(\on{Var}_k)[[t]]$ is a polynomial with constant term $1$.
\end{thm}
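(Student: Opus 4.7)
The plan is to apply Theorem~\ref{sbresult} to the Abel--Jacobi morphisms, after organizing the data by residue class modulo the period of $C$. For $n > 2g - 2$ the map $\phi_n \colon \on{Sym}^n(C) \to \on{Pic}^n(C)$ is a Severi--Brauer scheme of relative dimension $n - g$ with some Brauer class $\alpha_n \in \on{Br}(\on{Pic}^n(C))$ measuring the obstruction to a Poincar\'e line bundle. Let $d$ denote the \emph{period} of $C$: the smallest positive integer with $\on{Pic}^d(C)(k) \neq \emptyset$. This is finite, since $\mathcal{O}(P) \in \on{Pic}^{\deg P}(C)(k)$ for any closed point $P \in C$.

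Fix a line bundle $L$ of degree $d$ defined over $k$. Tensoring with $L$ gives an isomorphism $t_L\colon \on{Pic}^n(C) \xrightarrow{\sim} \on{Pic}^{n+d}(C)$; the key compatibility to check is $t_L^* \alpha_{n+d} = \alpha_n$. This follows because a (local) Poincar\'e bundle for $\on{Pic}^n(C)$ and one for $\on{Pic}^{n+d}(C)$ differ only by the pullback of $L$, which is an honest line bundle and contributes trivially to the Brauer obstruction. Consequently, for each $i \in \{0, 1, \ldots, d - 1\}$, all schemes $\on{Pic}^n(C)$ with $n \equiv i \pmod d$ and $n > 2g - 2$ are canonically identified with a single base $S_i$ carrying a fixed Brauer class $\alpha^{(i)}$.

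Theorem~\ref{sbresult} now furnishes $P^{(i)} \in K_0(\on{Var}_k)$ and $r^{(i)} > 0$ such that, for every $n \equiv i \pmod d$ with $n > 2g - 2$,
$$[\on{Sym}^n(C)] = P^{(i)}\bigl(1 + \mbb{L}^{r^{(i)}} + \mbb{L}^{2 r^{(i)}} + \cdots + \mbb{L}^{n - g + 1 - r^{(i)}}\bigr),$$
the geometric sum having $(n - g + 1)/r^{(i)}$ terms. Since this must be an integer for every $n$ in the arithmetic progression $\{n \equiv i \pmod d\} \cap \{n > 2g - 2\}$, we have $r^{(i)} \mid d$. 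Substituting into $Z_C(t)$ and interchanging the order of summation, each tail $\sum_{n > 2g - 2,\ n \equiv i \,(\mathrm{mod}\, d)}[\on{Sym}^n(C)]t^n$ becomes a double geometric series whose denominator divides $(1 - t^d)(1 - \mbb{L}^d t^d)$---the factor $(1 - \mbb{L}^d t^d)$ arising because the outer step is $t^d$ and each corresponding jump in the inner geometric sum contributes $\mbb{L}^{(d/r^{(i)}) r^{(i)}} = \mbb{L}^d$. Combined with the polynomial contribution from $0 \le n \le 2g - 2$, this shows that $p(t) := (1 - t^d)(1 - \mbb{L}^d t^d)$ is a valid choice; it has constant term $1$, as required.

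I expect the main obstacle to be Step~2: establishing the compatibility $t_L^* \alpha_{n+d} = \alpha_n$ of Brauer classes. Without this periodicity, Theorem~\ref{sbresult} would produce a priori different data $(P, r)$ for each $n$, and the tails of $Z_C(t)$ would not organize into finitely many geometric progressions with a universal denominator. The remaining steps---invoking Theorem~\ref{sbresult} and summing a standard double geometric series---are routine.
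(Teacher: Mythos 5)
The gap is in Step 2, and it is fatal: the statement you are trying to prove, with $d$ the period, is strictly stronger than Theorem~\ref{maintheorem}, and the paper explicitly disavows it (see the Remark following Example~\ref{realcurve}, discussing Kapranov's Remark 1.3.5(a)).

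You define $d$ as the smallest positive integer with $\on{Pic}^d(C)(k)\neq\emptyset$, and then assert ``Fix a line bundle $L$ of degree $d$ defined over $k$.'' This does not follow. When $C(k)=\emptyset$, the scheme $\on{Pic}^d(C)$ represents the \emph{fppf sheafification} of the Picard functor, and there is an exact sequence
$$0\to\on{Pic}(C)\to\on{Pic}^d(C)(k)\to\on{Br}(k)$$
whose last map records the obstruction for a $k$-point of $\on{Pic}^d(C)$ to be represented by an honest line bundle on $C$. The smallest degree of an actual line bundle defined over $k$ is the \emph{index} $\iota$ of $C$, which can be strictly larger than the period $d$. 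Indeed in Example~\ref{realcurve} one has $d=1$ (since $\on{Pic}^1(X)=\on{Spec}\mathbb{R}$) while $\iota=2$, and the paper points out that Theorem~\ref{maintheorem} only yields $(1-\mbb{L}^2t^2)(1-t^2)Z_X(t)$ polynomial; whether $(1-\mbb{L}t)(1-t)Z_X(t)$ is rational is left open and suspected to be false.

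Consequently, the ``key compatibility'' $t_\ell^*\alpha_{n+d}=\alpha_n$ fails in general. If $\ell\in\on{Pic}^d(C)(k)$ has nontrivial obstruction $\beta\in\on{Br}(k)$, then translation by $\ell$ shifts the Brauer class: a local Poincar\'e bundle for $\on{Pic}^{n+d}$ pulls back to a local Poincar\'e bundle for $\on{Pic}^n$ tensored with (the pullback of) a $\beta$-twisted line bundle, not an honest line bundle, so $t_\ell^*\alpha_{n+d}$ and $\alpha_n$ differ by the image of $\beta$ in $\on{Br}(\on{Pic}^n(C))$. Without this compatibility, Theorem~\ref{sbresult} produces potentially different data $(P,r)$ for each $n$, exactly the failure you anticipated.

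The fix is essentially what the paper does: replace the period $d$ by the degree $n$ of a $k$-rational effective $0$-cycle $D$ on $C$ (so $\iota\mid n$). Then $\mathcal{O}_C(D)$ is a genuine line bundle over $k$, tensoring by it preserves the Brauer class, and one gets a valid recursion. The paper organizes this via a short exact sequence of twisted vector bundles and Theorem~\ref{motivicdecomposition} rather than invoking Theorem~\ref{sbresult} directly, but the underlying idea---organizing by residue class and using translation by a rational line bundle---is the same as yours once $d$ is replaced by $n$.
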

\begin{rem}
Informally, we say that $C$ has ``rational motivic zeta function."  That $p(t)$ and $p(t)Z_C(t)$ have constant term $1$ is important: it implies that the numerator and denominator of this rational function are invertible in $K_0(\on{Var}_k)[[t]]$.  
\end{rem}
\begin{rem}
Much previous work (\cite{zhykhovich}, \cite{Kar95}) studies the Chow motive of a Severi-Brauer variety.  This methods of this note may be used to recover many of the results of these works; we believe our methods bear some similarity to those of \cite{Kar95}.
\end{rem}
\subsection{Acknowledgements}
This note would not have been possible without discussions with Jeremy Booher, A.J.~de Jong, Mircea Musta\c{t}\u{a}, John Pardon, Burt Totaro, and my advisor Ravi Vakil.
\section{Twisted Sheaves}
Traditionally, the Brauer group of a scheme is studied by means of Azumaya algebras (\cite{groupedebraueri}, \cite{groupedebrauerii}) or Severi-Brauer varieties \cite{artin}; we will find it convenient to use the notion of twisted sheaves instead.

We begin with a brief, largely self-contained, review of the facts about twisted sheaves that we will need---useful references are Caldararu \cite{caldararu} or Lieblich \cite{lieblich}.
 
\begin{defn}[The category of $\alpha$-twisted sheaves, $\on{QCoh}(X, \alpha)$]
Let $X$ be a scheme and $\alpha\in H^2(X_{\text{\'et}}, \mathbb{G}_m)$ a cohomology class, represented by a \v{C}ech 2-cocycle $\lambda\in \Gamma(U\times_X U\times_X U, \mbb{G}_m)$ for some \'etale cover $U\to X$.  The objects of the category $\on{QCoh}(X, \alpha)$ of $\alpha$-twisted sheaves are ``descent data for quasi-coherent sheaves," twisted by $\alpha$.  Namely, let $\pi_1, \pi_2: U\times_X U\to U$ be the two projections, and similarly with $\pi_{ij}: U\times_X U\times_X U\to U\times_X U$.  An $\alpha$-twisted sheaf is the data of a quasi-coherent sheaf $\mathcal{E}$ on $U$, and an isomorphism $\phi: \pi_1^*\mathcal{E}\overset{\sim}{\to} \pi_2^*\mathcal{E}$, so that $\pi_{23}^*\phi\circ \pi_{12}^*\phi=\lambda\cdot \pi_{13}^*\phi$.  Observe that if $\mathcal{E}$ is a vector bundle, we may (after refining $U$ to trivialize $\mathcal{E}$) view this descent data as the data of a section $g'\in \Gamma(U\times_X U, \on{GL}_n)$; we call $(\mathcal{E}, \phi)$ an $\alpha$-twisted vector bundle if $\mcl{E}$ is a vector bundle.

A morphism $(\mathcal{E}, \phi)\to (\mathcal{E}', \phi')$ is defined to be a morphism $f: \mcl{E}\to \mcl{E}'$ so that the diagram $$\xymatrix{
\pi_1^*\mathcal{E} \ar[r]^\phi \ar[d]^{\pi_1^*f} & \pi_2^*\mathcal{E}\ar[d]^{\pi_2^*f}\\
\pi_1^*\mathcal{E}' \ar[r]^{\phi'} & \pi_2^*\mathcal{E}'
}$$
commutes.
\end{defn}
\begin{rem}
A priori, the definition of $\on{QCoh}(X, \alpha)$ depends on the choice of cocycle $\lambda$ representing $\alpha\in H^2(X, \mbb{G}_m)$.  However, if $\lambda$ and $\lambda'$ are two cocycles representing $\alpha$, then the categories of twisted sheaves they define are (non-canonically) equivalent.  Namely, choose a $1$-cocycle $\beta$ with $d\beta=\lambda^{-1}\lambda'$.  Then the functor $$(\mcl{E}, \phi)\mapsto (\mcl{E}, \beta\phi)$$ is an equivalence of categories $$\on{QCoh}(X, [\lambda])\to \on{QCoh}(X, [\lambda']).$$   
This equivalence \emph{does} depend on the choice of $\beta$; these equivalences (up to natural isomorphism) are a torsor under $H^1(X, \mbb{G}_m)$ (which corresponds to the fact that there are autoequivalences of $\on{QCoh}(X, \alpha)$ coming from the functors $$(\mathcal{E}, \phi)\mapsto (\mathcal{E}\otimes \mcl{L}, \phi\otimes \on{id})$$ where $\mcl{L}$ is a line bundle on $X$).
\end{rem}

\begin{prop}\label{twistedfacts}
Let $\alpha, \alpha'\in H^2(X_{\text{\'et}}, \mathbb{G}_m)$ be cohomology classes.
\begin{enumerate}
\item $\alpha$ is a Brauer class if and only if there exists an $\alpha$-twisted vector bundle.  
\item $\on{QCoh}(X, \alpha)$ is an Abelian category with enough injectives.
\item There are natural functors $$-\otimes-: \on{QCoh}(X, \alpha)\times \on{QCoh}(X, \alpha')\to \on{QCoh}(X, \alpha+\alpha')$$ and $$\on{Hom}(-,-): \on{QCoh}(X, \alpha)^{op}\times \on{QCoh}(X, \alpha')\to \on{QCoh}(X, \alpha'-\alpha)$$ given by $\otimes$ and $\on{Hom}$ on twisted descent data.
\item Similarly, $\bigwedge^n$ and $\on{Sym}^n$ extend to functors $\on{QCoh}(X, \alpha)\to \on{QCoh}(X, n\alpha)$.
\item If $f: X\to Y$ is a morphism, there is a natural functor $$f^*: \on{QCoh}(Y, \alpha)\to \on{QCoh}(X, f^*\alpha)$$ given by applying $f^*$ to twisted descent data.
\item $\on{QCoh}(X, 0)$ is the usual category of quasi-coherent sheaves on $X$.
\end{enumerate}
\end{prop}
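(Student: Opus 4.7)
My plan is to dispatch the formal items (3)--(6) by direct manipulation of descent data, handle (2) via standard Grothendieck-abelian-category machinery, and then concentrate on (1), which is the one non-formal claim. For (3)--(6), after refining the \'etale cover so that $(\mathcal{E},\phi)\in \on{QCoh}(X,\alpha)$ and $(\mathcal{E}',\phi')\in \on{QCoh}(X,\alpha')$ are defined over a common $U\to X$ with representing 2-cocycles $\lambda,\lambda'$, one checks that $(\mathcal{E}\otimes\mathcal{E}',\phi\otimes\phi')$ satisfies the cocycle identity for $\lambda\lambda'$; the same bookkeeping gives $\on{Hom}$ (twisted by $\lambda^{-1}\lambda'$), $\on{Sym}^n$ and $\bigwedge^n$ (twisted by $\lambda^n$, since a scalar $\lambda$ acts on an $n$-fold symmetric or exterior power by $\lambda^n$), and pullback $f^*$ by base change of the cover. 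If $\alpha=0$ we take $\lambda=1$, and the category becomes that of effective \'etale descent data, which is $\on{QCoh}(X)$ by faithfully flat descent.

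For (2), kernels, cokernels, images, and arbitrary direct sums are computed on $U$ and automatically inherit descent isomorphisms because $\pi_i^*$ is exact and preserves colimits; this makes $\on{QCoh}(X,\alpha)$ an abelian category with exact filtered colimits. A generator can be produced by pushing forward a quasi-coherent generator from $U$ (appropriately twisted so that the result lies in $\on{QCoh}(X,\alpha)$), and Grothendieck's theorem on Grothendieck abelian categories then yields enough injectives.

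The main obstacle is (1). One direction is essentially formal: given an $\alpha$-twisted vector bundle $\mathcal{E}$ of rank $n$, the sheaf $\mathcal{H}om(\mathcal{E},\mathcal{E})$ lies in $\on{QCoh}(X,0)\simeq \on{QCoh}(X)$ by (3) and (6), is \'etale-locally isomorphic to the matrix algebra $\on{Mat}_n(\mathcal{O})$, and therefore defines an Azumaya algebra on $X$; chasing the connecting homomorphism of $1\to \mathbb{G}_m\to \on{GL}_n\to \on{PGL}_n\to 1$ shows that its class in $H^2(X,\mathbb{G}_m)$ is exactly $\alpha$, so $\alpha\in\on{Br}(X)$. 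The converse---that every Brauer class admits a twisted vector bundle---is the substantive content, and I would invoke the standard result (going back to Giraud and sharpened by de Jong/Gabber): if $A$ is an Azumaya algebra of class $\alpha$, then $A$ is \'etale-locally of the form $\on{End}(\mathcal{E}_U)$, and lifting the resulting $\on{PGL}_n$-cocycle to $\on{GL}_n$ after a further refinement of $U$ produces both a 2-cocycle $\lambda$ cohomologous to $\alpha$ and the required glueing isomorphisms on the $\mathcal{E}_U$---that is, an $\alpha$-twisted vector bundle.
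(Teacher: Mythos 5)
Your proposal is correct in substance and follows the same standard route the paper leaves implicit: the paper's proof consists of the single line ``All of the statements are easy, aside from (2),'' followed by a pointer to Lieblich \cite[Lemma 2.2.3.2]{lieblich} and C\u{a}ld\u{a}raru \cite[Lemma 2.1.1]{caldararu} for (2). Your treatment of (3)--(6) by direct cocycle bookkeeping on a common refined cover, and your sketch of (2) via the Grothendieck-abelian-category/Gabriel--Popescu route, match what those references do. For (1), your two directions (taking $\mathcal{H}om(\mathcal{E},\mathcal{E})$ to get an Azumaya algebra of class $\alpha$; and, conversely, trivializing an Azumaya algebra \'etale-locally and lifting the resulting $\on{PGL}_n$-cocycle to $\on{GL}_n$ on a refinement) are exactly the intended arguments and are genuinely elementary.

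One attribution should be corrected. You invoke ``Giraud, sharpened by de Jong/Gabber'' for the converse in (1), but that theorem concerns the much deeper question of whether every \emph{torsion cohomology class} in $H^2(X_{\text{\'et}},\mathbb{G}_m)$ comes from an Azumaya algebra, i.e.\ $\on{Br}(X)=\on{Br}'(X)$ for (quasi-)projective $X$. Here the hypothesis is already that $\alpha$ lies in $\on{Br}(X)$, so an Azumaya algebra of class $\alpha$ is given, and the only content is the elementary cocycle-lifting you describe (together with the fact, which is the definition of an Azumaya algebra, that it is \'etale-locally $\on{End}$ of a free module). Citing de Jong/Gabber here would smuggle in a hypothesis (quasi-projectivity) and a hard theorem that the proposition neither needs nor uses. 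With that citation removed and the argument stated on its own, your proof is complete and agrees with the paper's intent.
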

\begin{proof}
All of the statements are easy, aside from (2).  For a sketch proof of (2), see \cite[Lemma 2.2.3.2]{lieblich} or \cite[Lemma 2.1.1]{caldararu}.
\end{proof}
\begin{prop}\label{twistedlinebundle}
There is an $\alpha$-twisted line bundle on $X$ if and only if $\alpha=0$.  
\end{prop}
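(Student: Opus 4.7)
The plan is to unwind what an $\alpha$-twisted line bundle amounts to at the level of \v{C}ech cochains, and show that the existence of such a gadget is precisely the statement that the cocycle $\lambda$ representing $\alpha$ is a coboundary. The two directions are essentially formal once the descent data is written out explicitly.

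For the ``only if'' direction, suppose $(\mcl{E}, \phi)$ is an $\alpha$-twisted line bundle, with $\alpha$ represented by a 2-cocycle $\lambda\in \Gamma(U\times_X U\times_X U, \mbb{G}_m)$. Since line bundles are Zariski-locally (hence \'etale-locally) trivial, after refining the \'etale cover $U\to X$ we may assume $\mcl{E}\simeq \mcl{O}_U$. Under this identification, $\phi:\pi_1^*\mcl{O}_U\overset{\sim}{\to}\pi_2^*\mcl{O}_U$ is multiplication by a unit $g\in \Gamma(U\times_X U, \mbb{G}_m)$. The twisted cocycle condition $\pi_{23}^*\phi\circ \pi_{12}^*\phi = \lambda\cdot \pi_{13}^*\phi$ then reads $\pi_{23}^*g \cdot \pi_{12}^*g = \lambda\cdot \pi_{13}^*g$, i.e.~$\lambda = dg$. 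Hence $\alpha = [\lambda] = 0$ in $H^2(X_{\text{\'et}}, \mbb{G}_m)$.

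For the ``if'' direction, suppose $\alpha = 0$ and represent $\alpha$ by a coboundary $\lambda = d\beta$ for some $1$-cochain $\beta \in \Gamma(U\times_X U, \mbb{G}_m)$. Take $\mcl{E} = \mcl{O}_U$ on $U$, and let $\phi:\pi_1^*\mcl{O}_U\to \pi_2^*\mcl{O}_U$ be multiplication by $\beta$. By construction $d\beta = \lambda$, which is exactly the twisted cocycle condition, so $(\mcl{O}_U, \beta)$ is an $\alpha$-twisted line bundle. (If one starts from a different cocycle $\lambda'$ cohomologous to $\lambda$ via some $\beta'$, the equivalence of categories $\on{QCoh}(X,[\lambda])\simeq \on{QCoh}(X,[\lambda'])$ discussed in the previous remark transports this example across, so the statement is insensitive to the cocycle representative.)

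There is no real obstacle here beyond bookkeeping the signs/orientation conventions in the \v{C}ech differential and making sure the refinement of the cover used to trivialize $\mcl{E}$ does not affect the cohomology class of $\lambda$. Both of these are standard, so the proof is essentially an unwinding of definitions.
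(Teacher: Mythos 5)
Your proof is correct and takes essentially the same approach as the paper: both directions amount to unwinding the twisted descent data after trivializing the rank-one sheaf on a refined cover, whereupon $\phi$ becomes a $1$-cochain whose coboundary is the defining cocycle $\lambda$. The only cosmetic difference is that for the ``if'' direction the paper simply exhibits $\mcl{O}_X$ with the trivial cocycle representative, whereas you explicitly produce $(\mcl{O}_U,\beta)$ for an arbitrary coboundary representative $\lambda=d\beta$; both are fine given the remark that the category $\on{QCoh}(X,\alpha)$ depends on $\lambda$ only up to equivalence.
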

\begin{proof}
If $\alpha=0$, $\mathcal{O}_X$ is an $\alpha$-twisted line bundle.

On the other hand, let $(\mathcal{L}, \phi)$ be an $\alpha$-twisted line bundle, given by twisted descent data on some \'etale cover $U\to X$.  We may choose a cover $r: U'\to U$ so that $r^*\mathcal{L}$ is trivial; after choosing a trivialization, we may view $r^*\phi$ as an element of $\Gamma(U'\times_X U', \mathbb{G}_m)$, e.g. a $1$-cochain for $\mathbb{G}_m$.  But then $[d(r^*\phi)]=\alpha$, so $\alpha$ is a coboundary.  Thus $\alpha=0$.
\end{proof}
\begin{cor}
Suppose $\mathcal{E}$ is an $\alpha$-twisted vector bundle of rank $n$.  Then  $\alpha$ is $n$-torsion in $Br(X)$.
\end{cor}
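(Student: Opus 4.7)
The plan is to reduce to Proposition \ref{twistedlinebundle} by forming the determinant line bundle of $\mathcal{E}$ in the twisted category. Concretely, I would apply part (4) of Proposition \ref{twistedfacts} to the functor $\bigwedge^n$, which sends $\on{QCoh}(X,\alpha)$ to $\on{QCoh}(X,n\alpha)$. Applied to the rank-$n$ $\alpha$-twisted vector bundle $\mathcal{E}$, this produces $\det(\mathcal{E}) := \bigwedge^n \mathcal{E}$, an object of $\on{QCoh}(X,n\alpha)$.

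Next, I would verify that $\det(\mathcal{E})$ is a line bundle. Since $\mathcal{E}$ is given by a rank-$n$ vector bundle on some \'etale cover $U \to X$ together with twisted descent data, and the formation of $\bigwedge^n$ is compatible with pullback along $U \times_X U \to U$, the underlying sheaf of $\det(\mathcal{E})$ is the usual (invertible) determinant of the rank-$n$ vector bundle on $U$. So $\det(\mathcal{E})$ is an $(n\alpha)$-twisted line bundle.

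Finally, I would apply Proposition \ref{twistedlinebundle} to the class $n\alpha \in H^2(X_{\text{\'et}},\mathbb{G}_m)$: the existence of an $(n\alpha)$-twisted line bundle forces $n\alpha = 0$ in the cohomology group, hence in $\on{Br}(X)$. This completes the argument.

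There is essentially no obstacle here: the proof is a one-line application of the functoriality of $\bigwedge^n$ in the twisted setting followed by the key vanishing criterion from Proposition \ref{twistedlinebundle}. The only thing to be careful about is confirming that the descent datum produced by $\bigwedge^n$ on the twisted side indeed corresponds to the cocycle $\lambda^n$ representing $n\alpha$, which follows directly from how $\bigwedge^n$ interacts with the cocycle condition $\pi_{23}^*\phi \circ \pi_{12}^*\phi = \lambda \cdot \pi_{13}^*\phi$ on a rank-$n$ bundle (the scalar $\lambda$ acts on the top exterior power as $\lambda^n$).
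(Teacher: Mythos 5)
Your proposal is correct and is essentially identical to the paper's own proof: form $\det(\mathcal{E}) = \bigwedge^n\mathcal{E}$, note it is an $(n\alpha)$-twisted line bundle by Proposition \ref{twistedfacts}(4), and invoke Proposition \ref{twistedlinebundle} to conclude $n\alpha = 0$. The extra verification you sketch (that the cocycle on the top exterior power is $\lambda^n$) is the content implicitly packaged into Proposition \ref{twistedfacts}(4), so no gap remains.
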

\begin{proof}
$\bigwedge^n\mathcal{E}$ is an $n\alpha$-twisted line bundle by Proposition \ref{twistedfacts}(4)---thus $n\alpha=0$ in $Br(X)$ by Proposition \ref{twistedlinebundle}.
\end{proof}

Twisted vector bundles have many of the same properties of vector bundles.
\begin{prop}\label{semisimple}
Suppose $X$ is an affine scheme, and $\alpha$ a Brauer class on $X$.  Then all short exact sequences of $\alpha$-twisted vector bundles on $X$ split.
\end{prop}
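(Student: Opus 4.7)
The plan is to reduce the question to the vanishing of higher cohomology of quasi-coherent sheaves on an affine scheme. Given a short exact sequence
$$0 \to \mathcal{E}' \to \mathcal{E} \to \mathcal{E}'' \to 0$$
of $\alpha$-twisted vector bundles, the key observation is that the internal Hom sheaf $\mathcal{F} := \mathcal{H}om(\mathcal{E}'', \mathcal{E}')$ is \emph{untwisted}: by Proposition \ref{twistedfacts}(3) it lies in $\on{QCoh}(X, \alpha - \alpha) = \on{QCoh}(X, 0)$, i.e., it is an ordinary quasi-coherent sheaf on $X$.  After passing to an \'etale cover that trivializes $\alpha$ and both twisted bundles, $\mathcal{F}$ becomes the usual Hom of two ordinary vector bundles, and so it is in fact locally free of finite rank.

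Next I would show $\on{Ext}^1_{\on{QCoh}(X,\alpha)}(\mathcal{E}'', \mathcal{E}') \cong H^1(X, \mathcal{F})$.  Since $\on{QCoh}(X,\alpha)$ has enough injectives by Proposition \ref{twistedfacts}(2), I take an injective resolution $\mathcal{E}' \to \mathcal{I}^\bullet$ in the twisted category.  Because $\mathcal{E}''$ is locally free as a twisted sheaf, the internal-Hom functor $\mathcal{H}om(\mathcal{E}'', -)$ is exact, so $\mathcal{H}om(\mathcal{E}'', \mathcal{I}^\bullet)$ is a quasi-coherent resolution of $\mathcal{F}$ on $X$.  The complex $\Gamma(X, \mathcal{H}om(\mathcal{E}'', \mathcal{I}^\bullet))$ then computes both $\on{Ext}^\bullet_{\on{QCoh}(X,\alpha)}(\mathcal{E}'', \mathcal{E}')$ (by the injectivity of $\mathcal{I}^\bullet$ and the identification $\Gamma(X, \mathcal{H}om(\mathcal{E}'', -)) = \on{Hom}_{\on{QCoh}(X,\alpha)}(\mathcal{E}'', -)$) and $H^\bullet(X, \mathcal{F})$, since on the affine scheme $X$ the quasi-coherent sheaves $\mathcal{H}om(\mathcal{E}'', \mathcal{I}^k)$ are $\Gamma$-acyclic by Serre vanishing.

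With this identification in hand the conclusion is immediate: $X$ being affine and $\mathcal{F}$ quasi-coherent give $H^1(X, \mathcal{F}) = 0$, hence the extension class of the given sequence in $\on{Ext}^1_{\on{QCoh}(X,\alpha)}(\mathcal{E}'', \mathcal{E}')$ vanishes and the sequence splits.

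The principal obstacle is the middle step, where one must carefully match Ext in the twisted abelian category with cohomology of the untwisted internal Hom sheaf.  A more hands-on alternative that avoids any injective-resolution machinery is a \v{C}ech argument: choose an \'etale cover $U \to X$ trivializing $\alpha$, $\mathcal{E}'$, and $\mathcal{E}''$; then the sequence of ordinary sheaves pulled back to $U$ splits, and the obstruction to gluing the local splittings into a global one is a \v{C}ech $1$-cocycle with values in $\mathcal{F}$, which is a coboundary because $\check{H}^1(X, \mathcal{F}) = 0$ for $X$ affine and $\mathcal{F}$ quasi-coherent.
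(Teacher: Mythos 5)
Your proposal is correct and takes essentially the same approach as the paper. The paper's proof is just the terse assertion $\on{Ext}^1_{\on{QCoh}(X,\alpha)}(\mathcal{E}_3,\mathcal{E}_1)=H^1(X_{\text{\'et}},\mathcal{E}_3^\vee\otimes\mathcal{E}_1)=0$; you have simply spelled out the justification of that identification (noting $\mathcal{E}_3^\vee\otimes\mathcal{E}_1=\mathcal{H}om(\mathcal{E}_3,\mathcal{E}_1)$ is untwisted, and running the local-to-global argument via an injective resolution or, alternatively, a \v{C}ech cocycle) before invoking affine vanishing.
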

\begin{proof}
Suppose $$0\to \mathcal{E}_1\to \mathcal{E}_2\to \mathcal{E}_3\to 0$$ is a short exact sequence of $\alpha$-twisted vector bundles.  We wish to show that $\on{Ext}^1_{\on{QCoh}(X, \alpha)}(\mathcal{E}_3, \mathcal{E}_1)=0$.  But we have $$\on{Ext}^1_{\on{QCoh}(X, \alpha)}(\mathcal{E}_3, \mathcal{E}_1)=H^1(X_{\text{\'et}}, \mathcal{E}_3^\vee\otimes \mathcal{E}_1)=0$$ where we use that $X$ is affine and that \'etale cohomology of coherent sheaves is the same as Zariski cohomology.
\end{proof}
\begin{cor}\label{schurlemma}
Let $\mathcal{E}$ be an $\alpha$-twisted vector bundle over the spectrum of a field.  Then $\mathcal{E}$ is simple if and only if $\on{End}(\mathcal{E})$ is a division algebra.
\end{cor}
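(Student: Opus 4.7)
The plan is to run the standard Schur's lemma argument, but in the abelian category $\on{QCoh}(\Spec k, \alpha)$, using Proposition \ref{semisimple} as the crucial input for the nontrivial direction. The whole proof rests on two facts already established: that $\on{QCoh}(\Spec k, \alpha)$ is abelian (Proposition \ref{twistedfacts}(2)), so kernels, images, cokernels, and the first isomorphism theorem all make sense; and that over an affine scheme (in particular $\Spec k$) every short exact sequence of $\alpha$-twisted vector bundles splits (Proposition \ref{semisimple}).

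For the forward implication, suppose $\mathcal{E}$ is simple and let $f\in \on{End}(\mathcal{E})$ be nonzero. Since the ambient category is abelian, $\ker(f)$ and $\on{im}(f)$ are sub-objects of $\mathcal{E}$; over $\Spec k$ every sub-object of a finite-dimensional twisted sheaf is again a twisted vector bundle (the underlying $k$-module is automatically free). Simplicity forces $\ker(f)=0$ (else $\ker(f)=\mathcal{E}$ and $f=0$) and $\on{im}(f)=\mathcal{E}$ (as it is nonzero), so $f$ is an isomorphism and therefore invertible in $\on{End}(\mathcal{E})$. Hence every nonzero endomorphism is a unit and $\on{End}(\mathcal{E})$ is a division algebra.

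For the converse, assume $\on{End}(\mathcal{E})$ is a division algebra, and let $\mathcal{E}'\hookrightarrow \mathcal{E}$ be a nonzero sub-$\alpha$-twisted vector bundle. The short exact sequence
$$0\to \mathcal{E}'\to \mathcal{E}\to \mathcal{E}/\mathcal{E}'\to 0$$
splits by Proposition \ref{semisimple}, giving a decomposition $\mathcal{E}=\mathcal{E}'\oplus \mathcal{E}''$ and hence a nonzero idempotent $e\in \on{End}(\mathcal{E})$ projecting onto $\mathcal{E}'$. From $e(e-1)=0$ and the fact that a division algebra has no zero divisors, either $e=0$ (excluded since $\mathcal{E}'\neq 0$) or $e=1$, in which case $\mathcal{E}'=\mathcal{E}$. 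Thus $\mathcal{E}$ has no proper nonzero sub-twisted vector bundles and is simple.

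The only genuine subtlety, and the one place the twisted setting might have caused trouble, is ensuring that sub- and quotient objects arising from a nonzero endomorphism are themselves $\alpha$-twisted vector bundles rather than merely objects of $\on{QCoh}(\Spec k,\alpha)$; but over a field this is automatic, so the argument really is just Schur's lemma packaged with the semisimplicity of Proposition \ref{semisimple}.
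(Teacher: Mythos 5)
Your proof is correct and takes essentially the same route as the paper: the forward implication is the usual Schur argument (a nonzero endomorphism has zero kernel by simplicity, hence is an isomorphism), and the converse uses Proposition~\ref{semisimple} to split off a proper nonzero sub-bundle and produce a non-invertible idempotent in $\on{End}(\mathcal{E})$, which the paper phrases as ``projection to a factor is not invertible.'' The only cosmetic difference is that for the forward direction the paper invokes a rank count after passing to an \'etale cover, while you use simplicity a second time to identify the image with $\mathcal{E}$; both are standard and equivalent.
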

\begin{proof}
Suppose $\mathcal{E}$ is simple.  Then any non-zero endomorphism of $\mathcal{E}$ must have no kernel, as the kernel would be an $\alpha$-twisted sub-bundle of $\mathcal{E}$.  But we are working over a field, so (working \'etale-locally), we see that an endomorphism with no kernel is an isomorphism.

On the other hand, if $\mathcal{E}$ is not simple, Proposition \ref{semisimple} gives that $\mathcal{E}=\mathcal{F}\oplus \mathcal{G}$ for some non-zero $\mcl{F}, \mcl{G}$; then projection to either factor is a non-invertible endomorphism.
\end{proof}

\begin{cor}\label{div}
Let $X$ be the spectrum of a field.  Then there is a unique isomorphism class of non-zero simple $\alpha$-twisted vector bundle over $X$.
\end{cor}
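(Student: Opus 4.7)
The plan is to prove existence and uniqueness separately; the content is in uniqueness.

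For existence, I would invoke Proposition \ref{twistedfacts}(1) to produce some $\alpha$-twisted vector bundle, and then choose one $\mathcal{E}$ of \emph{minimal positive rank}. Since $X = \Spec(k)$ is the spectrum of a field, any sub-$\alpha$-twisted sheaf of a vector bundle is itself a vector bundle (étale-locally we are taking subspaces of vector spaces), so a proper nonzero sub-object of $\mathcal{E}$ would have strictly smaller positive rank, contradicting minimality. Hence $\mathcal{E}$ is simple.

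For uniqueness, let $\mathcal{S}$ and $\mathcal{F}$ be two nonzero simple $\alpha$-twisted vector bundles. By Proposition \ref{twistedfacts}(3) and (6), the internal Hom $\on{Hom}(\mathcal{S},\mathcal{F}) \cong \mathcal{S}^\vee \otimes \mathcal{F}$ is a $0$-twisted sheaf on $\Spec(k)$, i.e., an ordinary $k$-vector space, of dimension $\on{rk}(\mathcal{S}) \cdot \on{rk}(\mathcal{F}) > 0$. It therefore has a nonzero global section, which is by definition a nonzero morphism $f : \mathcal{S} \to \mathcal{F}$ in $\on{QCoh}(X,\alpha)$. A Schur-style argument in the spirit of Corollary \ref{schurlemma} now finishes: $\ker f$ is a sub-$\alpha$-twisted-bundle of $\mathcal{S}$, so simplicity of $\mathcal{S}$ together with $f \neq 0$ forces $\ker f = 0$; then $\on{im} f$ is a nonzero sub-$\alpha$-twisted-bundle of $\mathcal{F}$, so simplicity of $\mathcal{F}$ forces $\on{im} f = \mathcal{F}$. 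Thus $f$ is an isomorphism.

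The only point that requires some care is that $\mathcal{S}^\vee \otimes \mathcal{F}$ really is nonzero as a $0$-twisted sheaf (equivalently, has the expected positive rank). Since tensor product and dual are defined on descent data by the usual tensor product and dual of sheaves, this reduces, after an étale trivialization of both bundles, to the standard fact that the tensor product of two nonzero finite-dimensional vector spaces is nonzero. So no serious obstacle arises; the corollary is essentially Schur's lemma translated through the twisted-descent dictionary.
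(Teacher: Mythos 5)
Your uniqueness argument is exactly the paper's: form the internal $\on{Hom}$, observe it lives in $\on{QCoh}(X,0)$ (hence is an ordinary $k$-vector space) of rank $\on{rk}(\mathcal{S})\cdot\on{rk}(\mathcal{F})>0$, take a nonzero global section, and apply Schur. The paper's proof addresses only uniqueness and leaves existence implicit; your minimal-rank argument supplying existence is a correct (and harmless) addition, but otherwise the two proofs coincide.
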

\begin{proof}
Suppose $D, D'$ are non-zero simple $\alpha$-twisted vector bundles.  Then $$\on{Hom}_{\on{QCoh}(X, \alpha)}(D, D')\simeq H^0_{\on{QCoh}(X)}(D^\vee \otimes D')\not=0.$$  But as $D, D'$ are simple, any non-zero morphism between them is an isomorphism.
\end{proof}
\begin{cor}[Artin-Wedderburn]\label{artinwedderburn}
Let $X$ be the spectrum of a field, and $D$ the unique non-zero simple $\alpha$-twisted vector bundle over $X$.  Then any $\alpha$-twisted vector bundle $E$ is isomorphic to $D^{\oplus n}$ for some $n$.
\end{cor}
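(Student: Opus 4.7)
The plan is to induct on the rank of $E$, reducing to the existence of a simple $\alpha$-twisted sub-bundle at each step and then splitting it off using the semisimplicity already established in Proposition \ref{semisimple}.

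If $E = 0$ the result holds with $n = 0$. Otherwise, I would argue that $E$ contains a simple $\alpha$-twisted sub-bundle. To find one, choose a non-zero $\alpha$-twisted sub-bundle $F \subseteq E$ of minimal positive rank; such a minimum exists because $E$ has finite rank. Since $X$ is the spectrum of a field, any sub-object of $F$ in $\on{QCoh}(X, \alpha)$ is étale-locally a subspace of a finite-dimensional vector space and hence automatically a sub-bundle. By minimality of the rank of $F$, any such proper sub-object must be zero, so $F$ is simple. By Corollary \ref{div}, $F \cong D$.

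Now apply Proposition \ref{semisimple} to the short exact sequence
\[
0 \to F \to E \to E/F \to 0
\]
in $\on{QCoh}(X, \alpha)$ (noting that the spectrum of a field is affine) to obtain a splitting $E \cong F \oplus (E/F) \cong D \oplus (E/F)$. The quotient $E/F$ is again an $\alpha$-twisted vector bundle of strictly smaller rank, so by the induction hypothesis $E/F \cong D^{\oplus m}$ for some $m \ge 0$. Hence $E \cong D^{\oplus (m+1)}$.

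The main obstacle in this outline is the step of producing a simple sub-bundle. Over a general base this would require more care, but over $\on{Spec}$ of a field the argument collapses to the observation that a sub-object of minimal positive rank admits no proper non-zero sub-bundle and therefore, by Corollary \ref{schurlemma}, is simple. Once this is in hand, the splitting and induction are immediate.
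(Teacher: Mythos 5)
Your proof is correct and follows essentially the same strategy as the paper's: induction on rank, using Proposition \ref{semisimple} to split off a summand and Corollary \ref{div} to identify simple objects with $D$. The only cosmetic difference is that you first isolate a simple sub-bundle (via a minimal-rank argument) and induct on the quotient, whereas the paper takes an arbitrary nonzero proper sub-bundle $E'$, applies the inductive hypothesis to both $E'$ and $E/E'$, and then splits the resulting extension of $D^{\oplus k'}$ by $D^{\oplus k}$; both variants are fine.
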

\begin{proof}
Let $E$ be a non-zero $\alpha$-twisted vector bundle.  If $E$ is simple, it is isomorphic to $D$ by Corollary \ref{div}.  Otherwise, let $E'$ be a nonzero proper sub-bundle; by induction on the rank, $E'\simeq D^{\oplus k}$ and $E/E'\simeq D^{\oplus k'}$.  So there is a short exact sequence $$0\to D^{\oplus k}\to \mcl{E}\to D^{\oplus k'}\to 0$$ and we may conclude the corollary by Proposition \ref{semisimple}.
\end{proof}
\begin{cor}\label{monomorphism}
Let $X$ be an integral Noetherian scheme and $\mathcal{E}_1, \mathcal{E}_2$ be two $\alpha$-twisted vector bundles on $X$, with ranks $r_1\leq r_2$.  Then there exists a non-empty open set $U\subset X$ and a monomorphism $\iota: \mathcal{E}_1|_U\hookrightarrow \mathcal{E}_2|_U$ so that $\on{coker}(\iota)$ is an $\alpha$-twisted vector bundle.
\end{cor}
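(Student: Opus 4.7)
The plan is to construct the desired monomorphism first at the generic point and then spread it out to an open neighborhood. Let $\eta$ be the generic point of $X$. By Corollary \ref{artinwedderburn}, there is a unique simple $\alpha|_\eta$-twisted vector bundle $D$ over $\eta$, and $\mathcal{E}_i|_\eta\simeq D^{\oplus k_i}$ for $i=1,2$. Writing $d=\on{rank}(D)$, we have $r_i=k_id$, so the hypothesis $r_1\leq r_2$ forces $k_1\leq k_2$. The obvious inclusion $\iota_\eta:D^{\oplus k_1}\hookrightarrow D^{\oplus k_2}$ (into the first $k_1$ factors) has cokernel $D^{\oplus(k_2-k_1)}$, which is an $\alpha|_\eta$-twisted vector bundle.

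Next I would spread out $\iota_\eta$. By Proposition \ref{twistedfacts}(3), the sheaf $\mathcal{H}om(\mathcal{E}_1,\mathcal{E}_2)$ lies in $\on{QCoh}(X,0)$, i.e.\ it is an ordinary quasi-coherent sheaf on $X$; in fact it is locally free since $\mathcal{E}_1,\mathcal{E}_2$ are twisted vector bundles (this can be checked after pullback to an \'etale cover trivializing $\alpha$, where both become ordinary vector bundles and $\mathcal{H}om$ becomes the usual Hom-bundle). On an integral Noetherian scheme, a section of a coherent sheaf at the generic point is represented by a section over some non-empty open subscheme. Hence $\iota_\eta$ extends to a morphism $\iota:\mathcal{E}_1|_{U_0}\to\mathcal{E}_2|_{U_0}$ in $\on{QCoh}(U_0,\alpha|_{U_0})$ for some non-empty open $U_0\subseteq X$.

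Finally, I would shrink $U_0$ to an open set where $\iota$ is a monomorphism with locally free cokernel in $\on{QCoh}(U,\alpha|_U)$. This is an \'etale-local condition: choose an \'etale cover $V\to U_0$ trivializing $\alpha$, so that $\mathcal{E}_i|_V$ and $\iota|_V$ become an ordinary morphism of ordinary vector bundles. The locus in $V$ where the fiberwise rank of this morphism equals the generic rank $r_1$ is open (by upper semicontinuity of the dimension of the kernel), contains the preimage of $\eta$, and descends to an open subscheme $U\subseteq U_0$ (since it is stable under the two projections $V\times_{U_0}V\rightrightarrows V$). On $U$, fiberwise injectivity with locally constant rank is equivalent to $\iota|_U$ being injective with locally free cokernel, which by descent gives the desired monomorphism of $\alpha$-twisted vector bundles with $\on{coker}(\iota|_U)$ an $\alpha$-twisted vector bundle.

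The only subtle step is the spread-out/descent argument of the final paragraph; the main obstacle is to verify cleanly that the "monomorphism with locally free cokernel" condition is \'etale-local and descends, but once one passes to a trivializing \'etale cover this becomes the standard open-ness statement for morphisms of ordinary vector bundles.
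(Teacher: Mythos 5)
Your proof is correct and follows the same strategy as the paper's proof, which reads in its entirety: apply Corollary~\ref{artinwedderburn} at the generic point and ``spread out.'' What you have written is a careful, accurate elaboration of that ``spreading out'' step (using that $\mathcal{H}om(\mathcal{E}_1,\mathcal{E}_2)$ is an ordinary coherent sheaf to extend $\iota_\eta$, and that the full-rank locus is open and \'etale-local to shrink), not a genuinely different route.
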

\begin{proof}
We apply Corollary \ref{artinwedderburn} at the generic point $\eta$ of $X$ to obtain a monomorphism $\mathcal{E}_1|_\eta\hookrightarrow \mathcal{E}_2|_\eta$.  Spreading out gives the claim.
\end{proof}

If $\mathcal{E}$ is a vector bundle, one may consider $\mbb{P}\mathcal{E}$, the scheme of hyperplanes in $\mathcal{E}$.  Similarly, given an $\alpha$-twisted sheaf $\mathcal{E}$ over a scheme $X$, one may obtain a Severi-Brauer variety with Brauer class $\alpha$ by considering $\mathbb{P}\mathcal{E}$, which is \'etale descent data for a scheme over $X$.  As $\mathbb{P}\mathcal{E}$ is anti-canonically polarized over $X$, this descent data is effective and we obtain a Severi-Brauer variety over $X$.  To obtain an Azumaya algebra with Brauer class $\alpha$, simply consider $\on{End}(\mathcal{E})$.  It is not hard to see that every Severi-Brauer variety or Azumaya algebra is obtained in this fashion; indeed, take the $\on{PGL}_n$-cocycle defining the Severi-Brauer variety or Azumaya algebra, and lift it to an arbitrary cocycle for $\on{GL}_n$. (To do so, one may have to refine the cover on which the cocycle is defined.)

We will require the following well-known fact about Severi-Brauer schemes; we sketch a proof using twisted sheaves.
\begin{cor}\label{sbwithsection}
Let $\pi: \mbb{P}\to S$ be a Severi-Brauer scheme over $S$.  If $\pi$ admits a section, $\mbb{P}=\mbb{P}(\mcl{E})$ for $\mcl{E}$ a vector bundle over $S$.
\end{cor}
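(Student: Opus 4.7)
The plan is to pass to the language of twisted sheaves developed above and reduce the statement to Proposition \ref{twistedlinebundle}. By the discussion immediately preceding the corollary, the Severi-Brauer scheme $\pi: \mbb{P}\to S$ with Brauer class $\alpha$ arises as $\mbb{P}\mcl{E}$ for some $\alpha$-twisted vector bundle $\mcl{E}$ on $S$. We wish to use the hypothesized section to produce an $\alpha$-twisted line bundle on $S$.

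Concretely, I would first work \'etale-locally: choose an \'etale cover $U\to S$ trivializing $\mcl{E}$, so that $\mcl{E}|_U$ is an ordinary rank $n$ vector bundle (equipped with the twisted descent datum $\phi$). The section $s: S\to \mbb{P}$ pulls back to a section $s_U: U\to \mbb{P}\mcl{E}|_U$, which is the classical data of a rank-one quotient $\mcl{E}|_U\twoheadrightarrow \mcl{L}_U$. The compatibility of $s$ with the descent datum $\phi$ forces the line bundles $\mcl{L}_U$ to glue into an $\alpha$-twisted line bundle $\mcl{L}$ on $S$ (equivalently, the kernel is an $\alpha$-twisted rank $n-1$ subbundle $\mcl{K}\hookrightarrow \mcl{E}$, and $\mcl{L}=\mcl{E}/\mcl{K}$ in $\on{QCoh}(S,\alpha)$). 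By Proposition \ref{twistedlinebundle}, the existence of $\mcl{L}$ forces $\alpha=0$.

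Once $\alpha=0$, Proposition \ref{twistedfacts}(6) (and the remark that $\on{QCoh}(S,\lambda)$ and $\on{QCoh}(S,\lambda')$ are equivalent whenever the cocycles $\lambda,\lambda'$ represent the same class) lets us identify $\mcl{E}$, up to tensoring by some line bundle $\mcl{M}$ on $S$, with an ordinary vector bundle $\mcl{E}'$ on $S$. Since $\mbb{P}(\mcl{E}'\otimes \mcl{M})=\mbb{P}(\mcl{E}')$, we conclude $\mbb{P}=\mbb{P}(\mcl{E}')$, as desired.

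The main obstacle is the gluing step in the second paragraph: one must check carefully that the section $s$, which is globally defined on $S$, translates into descent data for a rank-one quotient compatible with the twisted cocycle $\lambda$ representing $\alpha$, and not merely the untwisted cocycle. Once this compatibility is verified, the rest is a direct application of the twisted-sheaf machinery developed above.
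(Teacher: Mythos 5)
Your proof is correct and follows essentially the same route as the paper: translate the Severi-Brauer scheme into an $\alpha$-twisted vector bundle $\mcl{E}$ with $\mbb{P}=\mbb{P}(\mcl{E})$, use the section to produce an $\alpha$-twisted line bundle (you phrase it as a rank-one quotient, the paper as a line sub-bundle; either works), invoke Proposition \ref{twistedlinebundle} to conclude $\alpha=0$, and then pass to an ordinary vector bundle via the equivalence $\on{QCoh}(S,\lambda)\simeq\on{QCoh}(S,0)$. The only small inaccuracy is in your last paragraph: once you fix a $1$-cochain $\beta$ with $d\beta=\lambda^{-1}$, the equivalence $\on{QCoh}(S,\lambda)\to\on{QCoh}(S,0)$ already sends $\mcl{E}$ to an honest vector bundle $\mcl{E}'$ with $\mbb{P}(\mcl{E})=\mbb{P}(\mcl{E}')$ as $S$-schemes (since multiplying the descent cocycle by a $\mbb{G}_m$-valued cochain does not change the induced $\on{PGL}$-cocycle); the ``up to tensoring by a line bundle'' ambiguity concerns the non-uniqueness of $\beta$, not an obstruction to producing $\mcl{E}'$. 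This does not affect the correctness of the argument.
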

\begin{proof}
Let $\mcl{E}$ be an $\alpha$-twisted vector bundle so that $\mbb{P}=\mbb{P}(\mcl{E})$; we wish to show that $\alpha=0\in H^2(S, \mbb{G}_m)$.  But the section to $\pi$ corresponds to an $\alpha$-twisted line sub-bundle of $\mcl{E}$; hence by Proposition \ref{twistedlinebundle}, $\alpha$ is trivial.
\end{proof}

\section{The Motive of a Severi-Brauer Variety}
Suppos $S$ is a $k$-variety and $$0\to \mathcal{E}_1\to \mathcal{E}_2\to \mathcal{E}_3\to 0$$ is a short exact sequence of $\alpha$-twisted vector bundles on $S$.  We wish to relate the classes of the Severi-Brauer schemes $$\mathbb{P}(\mathcal{E}_1), \mathbb{P}(\mathcal{E}_2), \mathbb{P}(\mathcal{E}_3)$$ in $K_0(\on{Var}_k)$.  The main result of this section is such a relationship.
\begin{thm}\label{motivicdecomposition}
Suppose $\mathcal{E}_1, \mathcal{E}_3$ have ranks $r_1, r_3$ respectively, so that $\mathcal{E}_2$ has rank $r_2:=r_1+r_3$.  Then $$[\mathbb{P}(\mathcal{E}_2)]=[\mathbb{P}(\mathcal{E}_1)]+\mathbb{L}^{r_1}[\mathbb{P}(\mathcal{E}_3)]=[\mathbb{P}(\mathcal{E}_3)]+\mathbb{L}^{r_3}[\mathbb{P}(\mathcal{E}_1)]\in K_0(\on{Var}_k).$$ 
\end{thm}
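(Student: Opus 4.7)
The plan is to prove both equalities by scissors-type decompositions of $\mathbb{P}(\mathcal{E}_2)$. For the first equality I would stratify $\mathbb{P}(\mathcal{E}_2)$ using the subobject $\mathcal{E}_1\hookrightarrow\mathcal{E}_2$: the closed stratum should be $\mathbb{P}(\mathcal{E}_1)$ and the open complement an affine bundle of rank $r_1$ over $\mathbb{P}(\mathcal{E}_3)$. For the second equality I would instead stratify using the quotient $\mathcal{E}_2\twoheadrightarrow\mathcal{E}_3$: the closed stratum is $\mathbb{P}(\mathcal{E}_3)$ and the open complement an affine bundle of rank $r_3$ over $\mathbb{P}(\mathcal{E}_1)$. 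The core technical content is the same in both cases, so I sketch only the first.

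\'Etale-locally on $S$, the class $\alpha$ trivializes and the sequence splits (by Proposition \ref{semisimple} applied to an affine open), so $\mathcal{E}_2\cong\mathcal{E}_1\oplus\mathcal{E}_3$. In this split situation the standard cell decomposition of a projective bundle gives a closed inclusion $\mathbb{P}(\mathcal{E}_1)\hookrightarrow\mathbb{P}(\mathcal{E}_2)$ and identifies the complement with an $\mathbb{A}^{r_1}$-bundle over $\mathbb{P}(\mathcal{E}_3)$. Because the construction is functorial in the sequence of twisted bundles, it descends to $S$: intrinsically, $\mathbb{P}(\mathcal{E}_1)\hookrightarrow\mathbb{P}(\mathcal{E}_2)$ is cut out by the composite of the tautological twisted sub-line bundle $\mathcal{O}(-1)\hookrightarrow\pi_2^*\mathcal{E}_2$ with the projection $\pi_2^*\mathcal{E}_2\twoheadrightarrow\pi_2^*\mathcal{E}_3$, and the map $U\to\mathbb{P}(\mathcal{E}_3)$ from the complement $U$ classifies the resulting nowhere-zero map $\mathcal{O}(-1)|_U\to(\pi_2|_U)^*\mathcal{E}_3$.

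The crucial step is to show that $U\to\mathbb{P}(\mathcal{E}_3)$ is Zariski-locally trivial (not merely \'etale-locally trivial), which is what yields $[U]=\mathbb{L}^{r_1}[\mathbb{P}(\mathcal{E}_3)]$. Over a point of $\mathbb{P}(\mathcal{E}_3)$ represented by an $\alpha$-twisted line $\ell_3\subset\mathcal{E}_3$, the fiber of $U$ parametrizes splittings of the pulled-back extension $0\to\mathcal{E}_1\to\pi^{-1}(\ell_3)\to\ell_3\to 0$, so is a torsor under $\on{Hom}(\ell_3,\mathcal{E}_1)\cong\ell_3^\vee\otimes\mathcal{E}_1$. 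The key observation is that the twist cancels: $\ell_3^\vee\otimes\mathcal{E}_1$ has twist $(-\alpha)+\alpha=0$, so it is an \emph{honest} rank-$r_1$ vector bundle on $\mathbb{P}(\mathcal{E}_3)$. Globally $U\to\mathbb{P}(\mathcal{E}_3)$ is therefore a torsor under an honest vector bundle; such torsors are classified by $H^1$ in the Zariski topology of a coherent sheaf, which vanishes on affine opens. So the torsor trivializes Zariski-locally, and the scissor relation gives the identity.

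The principal obstacle I anticipate is the careful bookkeeping in the twisted-sheaf formalism: the ``tautological sub-bundle" on $\mathbb{P}(\mathcal{E}_3)$ exists only as a $\pi_3^*\alpha$-twisted line bundle, and one must verify intrinsically that the twist-cancellation in $\ell_3^\vee\otimes\pi_3^*\mathcal{E}_1$ produces a genuine untwisted vector bundle whose additive-group torsor structure matches $U\to\mathbb{P}(\mathcal{E}_3)$. Once this is in place, the second equality follows by the parallel argument applied to the other stratification: the closed stratum is $\mathbb{P}(\mathcal{E}_3)\hookrightarrow\mathbb{P}(\mathcal{E}_2)$ (cut out by the tautological map to $\pi_2^*\mathcal{E}_1$'s sub-structure becoming zero), and the open complement is an $\mathbb{A}^{r_3}$-bundle over $\mathbb{P}(\mathcal{E}_1)$ described as a torsor under the honest rank-$r_3$ bundle $\pi_1^*\mathcal{E}_3^\vee\otimes\mathcal{L}_1$, where $\mathcal{L}_1$ is the tautological twisted line quotient of $\pi_1^*\mathcal{E}_1$.
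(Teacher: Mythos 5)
Your proposal is correct, and it follows the same overall scissor decomposition as the paper: cut out the closed stratum $\mathbb{P}(\mathcal{E}_1)\hookrightarrow\mathbb{P}(\mathcal{E}_2)$ and identify the open complement $U$ with a Zariski-locally trivial $\mathbb{A}^{r_1}$-bundle over $\mathbb{P}(\mathcal{E}_3)$. The one genuine point of divergence is the mechanism by which you establish that this bundle is Zariski-locally trivial. The paper (Lemma \ref{normallemma}) base-changes the whole situation to $\mathbb{P}(\mathcal{E}_1)$, invokes Corollary \ref{sbwithsection} on the diagonal section of $\mathbb{P}(\mathcal{E}_1)\times_S\mathbb{P}(\mathcal{E}_1)\to\mathbb{P}(\mathcal{E}_1)$ to conclude that $\pi_1^*\alpha=0$, applies the classical \emph{untwisted} projection isomorphism there, and then descends by pulling back along the diagonal. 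You instead work directly on $\mathbb{P}(\mathcal{E}_3)$ and observe the twist cancellation: the tautological line bundle carries twist $\pi_3^*\alpha$, so $\mathcal{O}(-1)^\vee\otimes\pi_3^*\mathcal{E}_1$ (which is precisely the normal bundle $\mathcal{N}_{\mathbb{P}(\mathcal{E}_3)/\mathbb{P}(\mathcal{E}_2)}$) is an honest untwisted vector bundle, and $U\to\mathbb{P}(\mathcal{E}_3)$ is a torsor under it, hence Zariski-locally trivial since $H^1_{\mathrm{Zar}}$ of a coherent sheaf vanishes on affines. These are essentially dual phrasings of the same insight (your untwisted tautological $\mathcal{O}(-1)$ is exactly what makes $\pi_3^*\alpha$ vanish, cf.\ Proposition \ref{twistedlinebundle}), but your version is more intrinsic and avoids the detour through $\mathbb{P}(\mathcal{E}_1)\times_S\mathbb{P}(\mathcal{E}_1)$; it is arguably cleaner. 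Also note that your torsor description establishes $[U]=\mathbb{L}^{r_1}[\mathbb{P}(\mathcal{E}_3)]$ even if the torsor has no global section, whereas the paper sidesteps this by first reducing (via Proposition \ref{semisimple}) to $S$ affine where the sequence splits — a reduction which your first equality does not strictly need.

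One small caveat on your treatment of the second equality: the parallel stratification with $\mathbb{P}(\mathcal{E}_3)$ as the closed stratum only comes for free in the other projectivization convention (rank-one quotients rather than lines), or after splitting the sequence. The paper handles this by observing that after the affine/split reduction, $\mathcal{E}_1$ and $\mathcal{E}_3$ are symmetric, so the second equality is formally the first with indices swapped. You should do the same rather than asserting a direct dual construction, which as written (``cut out by the tautological map to $\pi_2^*\mathcal{E}_1$'s sub-structure becoming zero'') doesn't quite parse without a splitting. This is a presentational matter, not a gap.
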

Before giving the proof, we need a lemma.
\begin{lem}\label{normallemma}
Let $S$ be a scheme and $$\mathcal{E}=\mathcal{E}_1\oplus \mathcal{E}_2$$ a split $\alpha$-twisted vector bundle on $S$.  Then $$\mathbb{P}(\mathcal{E})\setminus \mathbb{P}(\mathcal{E}_2)\simeq\on{Tot}(\mathcal{N}_{\mathbb{P}(\mathcal{E}_2)/\mathbb{P}(\mathcal{E})})$$ over $\mbb{P}(\mathcal{E}_1)$.
\end{lem}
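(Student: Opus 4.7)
The plan is to exhibit the isomorphism directly from the splitting $\mcl{E} = \mcl{E}_1 \oplus \mcl{E}_2$, after reducing to the untwisted case by \'etale descent. The projective bundles $\mathbb{P}(\mcl{E}_i)$ and the total space of the normal bundle are ordinary $S$-schemes (the twisting enters only in the construction of $\mathbb{P}$ from an $\alpha$-twisted vector bundle), so any construction functorial in the splitting will descend. I would therefore pass to an \'etale cover $U \to S$ on which $\alpha$ trivializes, where $\mcl{E}_1$ and $\mcl{E}_2$ become honest vector bundles, and build the isomorphism there.

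On such a cover, $\mathbb{P}(\mcl{E}_1)$ and $\mathbb{P}(\mcl{E}_2)$ embed as disjoint closed subschemes of $\mathbb{P}(\mcl{E})$, since a hyperplane containing both summands would contain all of $\mcl{E}$. Hence $\mathbb{P}(\mcl{E}_1)$ sits as a closed subscheme inside the open complement $\mathbb{P}(\mcl{E}) \setminus \mathbb{P}(\mcl{E}_2)$, and the projection $\mcl{E} \twoheadrightarrow \mcl{E}_1$ induces a natural retraction $\pi : \mathbb{P}(\mcl{E}) \setminus \mathbb{P}(\mcl{E}_2) \to \mathbb{P}(\mcl{E}_1)$ sending a hyperplane $H \not\supset \mcl{E}_1$ to $H \cap \mcl{E}_1$. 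Using the splitting, the fiber of $\pi$ over $H_1 \in \mathbb{P}(\mcl{E}_1)$ is identified with $\on{Hom}(\mcl{E}_2, \mcl{E}_1/H_1)$, an affine space with a natural zero given by $H = H_1 \oplus \mcl{E}_2$.

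I would then identify this affine bundle with the normal bundle via the Euler sequence. At the embedded point $H_1 \oplus \mcl{E}_2$, the splitting decomposes the tangent space as $\on{Hom}(H_1 \oplus \mcl{E}_2,\, \mcl{E}_1/H_1) = \on{Hom}(H_1,\, \mcl{E}_1/H_1) \oplus \on{Hom}(\mcl{E}_2,\, \mcl{E}_1/H_1)$, the first summand being $T_{H_1}\mathbb{P}(\mcl{E}_1)$, so the normal direction is precisely $\on{Hom}(\mcl{E}_2,\, \mcl{E}_1/H_1)$ --- canonically matching the fiber of $\pi$ computed above. These pointwise identifications are manifestly natural in $H_1$ and assemble into an isomorphism of vector bundles over $\mathbb{P}(\mcl{E}_1)$.

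The main obstacle I anticipate is twisted-descent bookkeeping: verifying that $\pi$ and the identification with the normal bundle, built \'etale-locally on $U$, are compatible with the twisted descent data and hence descend to honest $S$-morphisms. Since every step of the construction uses only the splitting together with standard functorial operations on vector bundles --- all of which commute with the $\on{GL}$-valued transition cocycles defining the $\alpha$-twisted structure --- this descent is essentially automatic, but it is the one place where the argument is more than a transcription of the classical (untwisted) story.
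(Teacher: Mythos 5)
Your local construction is correct: over a cover trivializing $\alpha$, projection away from $\mathbb{P}(\mathcal{E}_2)$ retracts onto $\mathbb{P}(\mathcal{E}_1)$, the fiber over $H_1$ is $\operatorname{Hom}(\mathcal{E}_2,\mathcal{E}_1/H_1)$, and this matches the normal direction in $T_{H_1\oplus\mathcal{E}_2}\mathbb{P}(\mathcal{E})$. But you take a genuinely different route from the paper, and the step you flag as ``essentially automatic'' is precisely where the paper does something cleverer to avoid having to address it at all.

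The paper never passes to an \'etale cover and never invokes descent of the isomorphism. Instead it exploits the fact that the asserted isomorphism lives over $\mathbb{P}(\mathcal{E}_1)$: the Severi--Brauer variety $\mathbb{P}(\mathcal{E}_1)\times_S\mathbb{P}(\mathcal{E}_1)\to\mathbb{P}(\mathcal{E}_1)$ has the diagonal as a section, so Corollary \ref{sbwithsection} gives $\pi_1^*\alpha=0$ on $\mathbb{P}(\mathcal{E}_1)$. Hence the pullbacks of $\mathcal{E}$, $\mathcal{E}_1$, $\mathcal{E}_2$ to $\mathbb{P}(\mathcal{E}_1)$ are honest (untwisted) vector bundles, the classical statement applies over the base $\mathbb{P}(\mathcal{E}_1)$ to give the isomorphism over $\mathbb{P}(\mathcal{E}_1)\times_S\mathbb{P}(\mathcal{E}_1)$, and restricting along the diagonal $\Delta:\mathbb{P}(\mathcal{E}_1)\to\mathbb{P}(\mathcal{E}_1)\times_S\mathbb{P}(\mathcal{E}_1)$ recovers the original spaces. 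No descent datum is ever manipulated.

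Your approach would work, but the descent is not free: the transition isomorphism for $\mathcal{E}$ satisfies the cocycle condition only up to the central scalar $\lambda$, so one must actually argue that (i) it respects the splitting (true, since the splitting is a morphism in $\operatorname{QCoh}(S,\alpha)$), and (ii) the central $\mathbb{G}_m$ acts trivially on both $\mathbb{P}(-)$ and $\operatorname{Tot}(\mathcal{N})$, so the naturally induced maps form an honest descent datum under which your local isomorphism is equivariant. That is a real (if routine) verification, not a transcription. The paper's trick buys a proof with no such bookkeeping, at the small cost of already having Corollary \ref{sbwithsection} in hand; your approach is more pedestrian and self-contained but needs the check you hand-waved spelled out to be complete.

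(Incidentally, your reading of the statement --- producing the normal bundle of $\mathbb{P}(\mathcal{E}_1)$ rather than of $\mathbb{P}(\mathcal{E}_2)$, so that it lives over $\mathbb{P}(\mathcal{E}_1)$ as the lemma requires --- is the correct one; the subscript in the statement as printed is an index slip, as comparison with the use in Theorem \ref{motivicdecomposition} confirms.)
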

\begin{proof}
The idea of this statement is that projection away from $\mbb{P}(\mathcal{E}_2)$ induces the desired isomorphism.  This is well known in the case that $\alpha\in H^2(S, \mbb{G}_m)$ is trivial; that is, in the case where the $\mathcal{E}_i$ are ordinary (untwisted) vector bundles.  We reduce to that case.  

Observe that the map $\mbb{P}(\mathcal{E}_1)\times \mbb{P}(\mathcal{E}_1)\to \mbb{P}(\mathcal{E}_1)$ admits a section (the diagonal map); thus by Corollary \ref{sbwithsection}, if $\pi_1: \mbb{P}(\mathcal{E}_1)\to S$ is the structure map, $$\pi_1^*\alpha=0\in H^2(\mbb{P}(\mcl{E}_1), \mbb{G}_m).$$
Thus in particular $\mbb{P}(\mcl{E})\times \mbb{P}(\mcl{E}_1), \mbb{P}(\mcl{E}_i)\times \mbb{P}(\mcl{E}_1)$ are trivial Severi-Brauer varieties over $\mbb{P}(\mcl{E}_1)$, so by the split case, we have that there is a natural isomorphism $$\on{Tot}(\mcl{N}_{\mbb{P}(\mcl{E}_1)\times \mbb{P}(\mcl{E}_1)/\mbb{P}(\mcl{E})\times \mbb{P}(\mcl{E}_1)})\simeq \mbb{P}(\mcl{E})\times\mbb{P}(\mcl{E}_1)\setminus \mbb{P}(\mcl{E}_2)\times \mbb{P}(\mcl{E}_1)$$
over $\mbb{P}(\mcl{E}_1)\times \mbb{P}(\mcl{E}_1)$.  Pulling back along the diagonal map $\Delta: \mbb{P}(\mcl{E}_1)\to \mbb{P}(\mcl{E}_1)\times \mbb{P}(\mcl{E}_1)$ gives the desired claim.
\end{proof}
\begin{proof}[Proof of Theorem \ref{motivicdecomposition}]
Without loss of generality, $S$ is integral and affine, and the short exact sequence $$0\to \mathcal{E}_1\to \mathcal{E}_2\to \mathcal{E}_3\to 0$$ splits (by Proposition \ref{semisimple}), so it suffices to prove the first equality, and we may view $\mathbb{P}(\mathcal{E}_1), \mathbb{P}(\mathcal{E}_3)$ as (linear) Severi-Brauer subvarieties of $\mathbb{P}(\mathcal{E}_2)$.

The morphism $\mathcal{E}_1\to \mathcal{E}_2$ induces a closed embedding $\mathbb{P}(\mathcal{E}_1)\hookrightarrow \mathbb{P}(\mathcal{E}_2)$, so $$[\mbb{P}(\mathcal{E}_2)]=[\mbb{P}(\mathcal{E}_1)]+[U],$$ where $U:= \mathbb{P}(\mathcal{E}_2)\setminus\mathbb{P}(\mathcal{E}_1)$.  We wish to identify $U$ with the total space of a vector bundle over $\mathbb{P}(\mathcal{E}_3)$.  But projection away from $\mathbb{P}(\mathcal{E}_1)$ identifies $U$ with the total space $\on{Tot}(\mathcal{N}_{\mathbb{P}(\mathcal{E}_3)/\mathbb{P}(\mathcal{E}_2)})$ of $\mathcal{N}_{\mathbb{P}(\mathcal{E}_3)/\mathbb{P}(\mathcal{E}_2)}$ by Lemma \ref{normallemma}.  $\on{Tot}(\mathcal{N}_{\mathbb{P}(\mathcal{E}_3)/\mathbb{P}(\mathcal{E}_2)})$ is a Zariski-locally trivial $\mathbb{A}^{r_1}$ fiber-bundle over $\mathbb{P}(\mathcal{E}_3)$, so $$[U]=[\on{Tot}(\mathcal{N}_{\mathbb{P}(\mathcal{E}_3)/\mathbb{P}(\mathcal{E}_2)})]=\mathbb{L}^{r_1}[\mathbb{P}(\mathcal{E}_3)]\in K_0(\on{Var}_k)$$ as desired.
\end{proof}
\begin{cor}\label{motdec}
Suppose $\mathcal{E}$ is an $\alpha$-twisted vector bundle with $\mcl{E}=\mathcal{F}^{\oplus n}$ for some $\alpha$-twisted vector bundle $\mathcal{F}$ of rank $r$.  Then $$[\mbb{P}(\mcl{E})]=[\mbb{P}(\mcl{F})](1+\mbb{L}^{r}+\cdots+\mbb{L}^{r(n-1)}).$$
\end{cor}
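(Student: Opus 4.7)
The plan is to prove Corollary \ref{motdec} by induction on $n$, with the key input being the decomposition formula of Theorem \ref{motivicdecomposition} applied to a short exact sequence that peels off one copy of $\mathcal{F}$ at a time.

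For the base case $n=1$, both sides equal $[\mbb{P}(\mcl{F})]$ (using $1+\mbb{L}^r+\cdots+\mbb{L}^{r(n-1)}=1$), so there is nothing to show. For the inductive step, I would assume the formula for $\mathcal{F}^{\oplus(n-1)}$ and consider the split short exact sequence of $\alpha$-twisted vector bundles
\[
0 \to \mathcal{F}^{\oplus(n-1)} \to \mathcal{F}^{\oplus n} \to \mathcal{F} \to 0,
\]
where the first map is inclusion onto the first $n-1$ summands and the second is projection onto the last summand. Here the ranks are $r_1 = r(n-1)$, $r_2 = rn$, and $r_3 = r$.

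Applying Theorem \ref{motivicdecomposition} directly yields
\[
[\mbb{P}(\mathcal{F}^{\oplus n})] = [\mbb{P}(\mathcal{F}^{\oplus(n-1)})] + \mbb{L}^{r(n-1)}[\mbb{P}(\mathcal{F})].
\]
Substituting the inductive hypothesis $[\mbb{P}(\mathcal{F}^{\oplus(n-1)})] = [\mbb{P}(\mathcal{F})](1 + \mbb{L}^r + \cdots + \mbb{L}^{r(n-2)})$ and factoring out $[\mbb{P}(\mathcal{F})]$ gives the geometric sum $1 + \mbb{L}^r + \cdots + \mbb{L}^{r(n-1)}$, as desired.

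There is no real obstacle here: all the substantive work is packaged into Theorem \ref{motivicdecomposition}. The only point that deserves a sentence of care is confirming that the short exact sequence above genuinely is a sequence of $\alpha$-twisted vector bundles (so that the theorem applies), but this is immediate since direct summand inclusions and projections are morphisms in $\on{QCoh}(S,\alpha)$ by Proposition \ref{twistedfacts}(3). Thus this corollary is a clean two-line consequence of the theorem it follows.
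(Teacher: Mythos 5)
Your proof is correct and is exactly the paper's approach: the paper states that the corollary "is immediate from Theorem~\ref{motivicdecomposition} and induction on $n$," and you have simply written out that induction in detail.
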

\begin{proof}
This is immediate from Theorem \ref{motivicdecomposition} and induction on $n$.
\end{proof}
\begin{prop}\label{independence}
Let $S$ be a $k$-variety and $P_1, P_2$ two Severi-Brauer varieties over $S$ of the same dimension and with the same Brauer class $\alpha$.  Then $$[P_1]=[P_2]\in K_0(\on{Var}_k).$$ 
\end{prop}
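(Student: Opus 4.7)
The plan is to write each $P_i = \mathbb{P}(\mathcal{E}_i)$ for an $\alpha$-twisted vector bundle $\mathcal{E}_i$ of common rank $n$ (as in the discussion preceding Corollary \ref{sbwithsection}) and then to proceed by Noetherian induction on $S$. Partitioning $S$ into a disjoint union of integral locally closed subvarieties via the scissor relation in $K_0(\on{Var}_k)$, I reduce to the case where $S$ is integral.

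Let $\eta$ be the generic point of $S$, and let $D$ be the unique non-zero simple $\alpha|_\eta$-twisted vector bundle given by Corollary \ref{div}, of some rank $d$. By Corollary \ref{artinwedderburn}, every $\alpha|_\eta$-twisted vector bundle is a direct sum of copies of $D$, so both $\mathcal{E}_1|_\eta$ and $\mathcal{E}_2|_\eta$ are isomorphic to $D^{\oplus n/d}$, and hence to each other. Spreading this generic-fiber isomorphism out yields a non-empty open $V \subset S$ and an isomorphism $\mathcal{E}_1|_V \simeq \mathcal{E}_2|_V$ of $\alpha|_V$-twisted vector bundles. This produces an $S$-isomorphism $P_1|_V \simeq P_2|_V$, and in particular $[P_1|_V] = [P_2|_V]$. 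On the complement $Z = S \setminus V$, a proper closed subscheme of strictly smaller dimension, the Noetherian induction hypothesis gives $[P_1|_Z] = [P_2|_Z]$, and the scissor relation then yields
\[
[P_1] = [P_1|_V] + [P_1|_Z] = [P_2|_V] + [P_2|_Z] = [P_2].
\]

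The main subtlety is the spreading-out step: the twisted sheaves are given by descent data on an \'etale cover, so one needs to produce a twisted-sheaf isomorphism, not just a sheaf isomorphism on the cover. I would handle this by refining to a common \'etale cover $U \to S$ trivializing both descent data, invoking the standard spreading out of a morphism of quasi-coherent sheaves on $U$ from the generic fibre to an open neighborhood, and then shrinking $V$ so that the chosen extension is compatible with the descent cocycles --- a condition that is automatic once it is verified at the generic point, by density.
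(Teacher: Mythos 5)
Your proposal is correct and follows essentially the same route as the paper: the paper simply invokes Corollary~\ref{monomorphism} (which, with $r_1=r_2$, gives the generic isomorphism $\mathcal{E}_1|_U\simeq\mathcal{E}_2|_U$) and then does Noetherian induction, whereas you have inlined the proof of that corollary via Corollaries~\ref{div} and~\ref{artinwedderburn}. The spreading-out subtlety you flag is real but is already absorbed into the statement of Corollary~\ref{monomorphism} in the paper, so there is no substantive difference.
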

\begin{proof}
We may immediately replace $S$ with $S_{\on{red}}$.  Suppose $\mathcal{E}_1, \mathcal{E}_2$ are $\alpha$-twisted sheaves with $P_i=\mathbb{P}(\mathcal{E}_i)$.  Then by Corollary \ref{monomorphism} (replacing $S$ with an integral affine subscheme), there is an open set $U\subset S$ so that $\mathcal{E}_1|_U\simeq \mathcal{E}_2|_U$.  Thus $P_1|_U\simeq P_2|_U$ and so $[P_1|_U]=[P_2|_U]$.  Proceed by Noetherian induction.
\end{proof}
\begin{prop}[Theorem \ref{sbresult} refined]\label{refined9}
Let $S$ be a $k$-variety and $\alpha\in Br(S)$ a Brauer class.  Let $r=\on{gcd}(\on{rk}(\mcl{E}))$, where $\mcl{E}$ runs over all $\alpha$-twisted vector bundles.  Then there exists a class $P\in K_0(\on{Var}_k)$ so that for any Severi-Brauer $S$-scheme $\mbb{P}(\mcl{E})$ with Brauer class $\alpha$ and $\on{rk}(\mcl{E})=d$, $$[\mbb{P}(\mcl{E})]=P(1+\mbb{L}^r+\mbb{L}^{2r}+\cdots+ \mbb{L}^{d-r}).$$
\end{prop}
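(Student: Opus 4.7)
The plan is Noetherian induction on $S$, with the main geometric input being Artin--Wedderburn at the generic point (Corollary \ref{artinwedderburn}) together with the split case of Corollary \ref{motdec}; Proposition \ref{independence} allows us to ignore the precise isomorphism class of an $\alpha$-twisted bundle and depend only on its rank. Routine reductions first: since $[S]=[S_\on{red}]$ in $K_0(\on{Var}_k)$ we may replace $S$ by $S_\on{red}$, and if $S=S_1\cup S_2$ is reducible, the scissor/inclusion-exclusion identity
$$[\mbb{P}(\mcl{E})]=[\mbb{P}(\mcl{E}|_{S_1})]+[\mbb{P}(\mcl{E}|_{S_2})]-[\mbb{P}(\mcl{E}|_{S_1\cap S_2})]$$
together with the inductive hypothesis (plus the rank conversion below) reduces to the case $S$ integral.

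Assume now $S$ is integral with generic point $\eta$. Corollary \ref{artinwedderburn} produces a unique simple $\alpha|_\eta$-twisted bundle $D$ of some rank $r_\eta$; a standard spreading out argument yields an $\alpha|_U$-twisted bundle $\mcl{D}_U$ of rank $r_\eta$ on some dense open $U\subset S$. The key divisibility observation is that any $\alpha$-twisted bundle on $S$ restricts to $\eta$, whose rank must be a multiple of $r_\eta$ by Artin--Wedderburn; hence $r_\eta\mid r$. Likewise, for any closed $T\subset S$, restriction shows the corresponding gcd $r_T$ on $T$ satisfies $r_T\mid r$. For any $\mcl{E}$ of rank $d$ and Brauer class $\alpha$, the Severi--Brauer schemes $\mbb{P}(\mcl{E}|_U)$ and $\mbb{P}(\mcl{D}_U^{\oplus d/r_\eta})$ have the same Brauer class $\alpha|_U$ and fiber dimension, so Proposition \ref{independence} and Corollary \ref{motdec} together give
$$[\mbb{P}(\mcl{E}|_U)]=[\mbb{P}(\mcl{D}_U)](1+\mbb{L}^{r_\eta}+\cdots+\mbb{L}^{d-r_\eta})=P_U(1+\mbb{L}^r+\cdots+\mbb{L}^{d-r}),$$
where $P_U:=[\mbb{P}(\mcl{D}_U)](1+\mbb{L}^{r_\eta}+\cdots+\mbb{L}^{r-r_\eta})$ is independent of $\mcl{E}$; the second equality is the polynomial identity $\frac{\mbb{L}^d-1}{\mbb{L}^{r_\eta}-1}=\frac{\mbb{L}^r-1}{\mbb{L}^{r_\eta}-1}\cdot\frac{\mbb{L}^d-1}{\mbb{L}^r-1}$, valid since $r_\eta\mid r\mid d$. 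On $Z:=S\setminus U$, the inductive hypothesis combined with the analogous conversion from $r_Z$ to $r$ yields $[\mbb{P}(\mcl{E}|_Z)]=P_Z(1+\mbb{L}^r+\cdots+\mbb{L}^{d-r})$ for some $P_Z$ independent of $\mcl{E}$; the scissor relation $[\mbb{P}(\mcl{E})]=[\mbb{P}(\mcl{E}|_U)]+[\mbb{P}(\mcl{E}|_Z)]$ then delivers the claim with $P:=P_U+P_Z$.

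The main obstacle is essentially bookkeeping of the divisibilities $r_\eta\mid r$ and $r_T\mid r$ on the various strata of $S$: at each stage one must verify that the geometric series in $\mbb{L}^{r_\eta}$ or $\mbb{L}^{r_T}$ produced by Corollary \ref{motdec} or by the inductive hypothesis splits off a factor of $1+\mbb{L}^r+\cdots+\mbb{L}^{d-r}$, so that the residual polynomial gets absorbed into $P_U$ or $P_Z$. The geometric heart of the argument---Artin--Wedderburn at the generic point and Corollary \ref{motdec} for a split twisted bundle---is already available, as is Proposition \ref{independence}; the rest is algebraic manipulation.
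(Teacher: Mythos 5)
Your proof is correct and uses the same essential ingredients as the paper's: Noetherian induction, Artin--Wedderburn at the generic point (Corollary \ref{artinwedderburn}), the split-case computation of Corollary \ref{motdec}, and Proposition \ref{independence}. The only organizational difference is that you apply Proposition \ref{independence} up front, comparing $\mbb{P}(\mcl{E}|_U)$ with $\mbb{P}(\mcl{D}_U^{\oplus d/r_\eta})$ over an $\mcl{E}$-independent open $U$ and then factoring the geometric series via $r_\eta\mid r\mid d$, whereas the paper instead shrinks $U$ (in an $\mcl{E}$-dependent way) so that $\mcl{E}$ literally splits as a power of a rank-$r$ bundle on each stratum, invoking Proposition \ref{independence} only at the end to show the resulting $P$ does not depend on $\mcl{E}$.
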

\begin{proof}
We first show that given $\mcl{E}$, there exists a $P$ as desired; then we show that the class of $P$ does not depend on $\mcl{E}$.  

By Corollary \ref{motdec}, it suffices to find a stratification $\{S_i\}$ of $S$ so that on each stratum $(S_i)_{\on{red}}$, $\mcl{E}|_{S_i}=\mcl{F}_i^{\oplus k}$ for some $\alpha$-twisted vector bundle $\mcl{F}_i$ of rank $r$ on $(S_i)_{\on{red}}$.  For then we may write $P=\sum_i [\mbb{P}(\mcl{F}_i)]$, and the result follows for $\mcl{E}$. 

Now let $S_1$ be any irreducible open affine; then at the generic point $\iota: \eta\hookrightarrow (S_1)_{\on{red}}$, $\mcl{E}|_\eta=D^{\oplus k}$ for the unique simple $\iota^*\alpha$-twisted vector bundle $D$.  But $\on{rk}(D)$ divides $r$, as the generic fiber of any $\alpha$-twisted vector bundle admits a similar decomposition, so after shrinking $S_1$ we may take $\mcl{F}_1=\mcl{D}^{\oplus k'}$ for some $k'$ and some $\mcl{D}$ extending $D$.  We now proceed by Noetherian induction.

To see that our choice of $P$ is independent of $\mcl{E}$, let $\mcl{E}'$ be another $\alpha$-twisted vector bundle, with associated stratification $\{S_j'\}$ and twisted vector bundles $\mcl{F}_j'$ on $(S_j')_{\on{red}}$, and $P'=\sum_j [\mbb{P}(\mcl{F}_j')]$.  Then on each irreducible component of $$U_{ij}=(S_i\cap S_j')_{\on{red}},$$ $\mbb{P}(\mcl{F}_i|_{U_{ij}}), \mbb{P}(\mcl{F}_j'|_{U_{ij}})$ satisfy the hypothesis of Proposition \ref{independence}.  Thus, $P=P'$ as desired.
\end{proof}
\begin{rem}
This result is an analogue of the main result of \cite{Kar95} with the features that (1) equality holds in the Grothendieck ring of varieties and (2) the result is proven in the relative setting.  The methods here may be used to obtain relative versions of the many of the results of \cite{zhykhovich}, \cite{Kar95}; for example, the main theorem \cite[Theorem 1.3.1]{Kar95}, which gives a decomposition of the motive of a Severi-Brauer variety over a field may be extended to Severi-Brauer schemes over arbitrary $k$-varieties.
\end{rem}
\section{The Abel-Jacobi Morphism}
Let $C$ be a smooth, projective, geometrically connected curve over a field $k$, with genus $g$.  We now consider the Abel-Jacobi morphism $$AJ^n: \on{Sym}^n(C) \to \on{Pic}^n(C)$$ where $n> 2g-2$.  If $C$ has a rational point, this is a Zariski $\mathbb{P}^{n-g}$-bundle; so in general, $AJ^n$ exhibits $\on{Sym}^n(C)$ as a Severi-Brauer variety over $\on{Pic}^n(C)$.  

Let $K/k$ be a finite separable extension over which $C$ obtains a rational point, so that there is a universal line bundle $\mathcal{L}_n$ over $C_K\times \on{Pic}^n(C)_K$, and let $p: C\times \on{Pic}^n(C)\to \on{Pic}^n(C)$ and $q: C\times \on{Pic}^n(C)\to C$ be the natural projections; we let $p_K, q_K$ be the maps obtained by extending scalars to $K$.  Then by \cite[Theorem 4]{schwarzenberger}, $$\on{Sym}^n(C)_K\simeq \mathbb{P}_{\on{Pic}^n(C)_K}({p_K}_*\mathcal{L}_n)$$ for $n> 2g-2$.  Viewing $\on{Sym}^n(C)$ as a descent of $\mathbb{P}_{\on{Pic}^n(C)_K}({p_K}_*\mathcal{L}_n)$ induces descent data on $\mathbb{P}_{\on{Pic}^n(C)_K}({p_K}_*\mathcal{L}_n)$, which we  may view as a $1$-cocycle valued in $\on{PGL}({p_K}_*\mathcal{L}_n)$.  Choosing an arbitrary lift of this $1$-cocycle to a $1$-cocycle valued in $\on{GL}({p_K}_*\mathcal{L}_n)$ (to do so, one may have to refine the cover $\on{Pic}^n(C)_K\to \on{Pic}^n(C)$), we may view ${p_K}_*\mathcal{L}_n$ as an $\alpha$-twisted sheaf $\mathcal{F}_n$ on $\on{Pic}^n(C)$ for some $\alpha\in H^2(\on{Pic}^n(C), \mathbb{G}_m)$, and $\on{Sym}^n(C)=\mathbb{P}_{\on{Pic}^n(C)}(\mathcal{F}_n)$.  
\begin{proof}[Proof of Theorem \ref{maintheorem}]
Let $g$ be the genus of $C$.

Let $D$ be a $k$-rational effective $0$-cycle on $C$ of degree $n$, i.e. a rational point of $\on{Sym}^n(C)$ for some $n$.  Let $f\in\Gamma(C, \mathcal{O}_C(D))$ be such that $$0\to \mathcal{O}_C(-D)\overset{\cdot f}{\longrightarrow} \mathcal{O}_C\to \mathcal{O}_D\to 0$$ is exact.  Let $$a^m_D: \on{Pic}^m(C)\overset{\sim}{\longrightarrow} \on{Pic}^{m+n}(C)$$ be the map induced by multiplication by $AJ^n(D)$.  Note that after changing base to $K$, there is an isomorphism ${a_D^m}^*\mathcal{L}_{m+n}\simeq \mathcal{L}_m\otimes q_K^*\mathcal{O}_C(D)$; as $f$ is defined over $k$, multiplication by $f$ induces a morphism $b_D^m: \mathcal{F}_m\to {a_D^m}^*\mathcal{F}_{m+n}$.  One may check that $b_D^m$ is a monomorphism by changing base to $K$.  For $m>2g-2$, the induced map $$\mathbb{P}(b_D^m): \on{Sym}^m(C)\simeq \mathbb{P}(\mathcal{F}_m)\to \mathbb{P}({a_D^m}^*\mathcal{F}_{m+n})\simeq \on{Sym}^{m+n}(C)$$ agrees with the morphism $\on{Sym}^m(C)\to \on{Sym}^{n+m}(C)$ sending a effective degree $m$ $0$-cycle $R$ to $R+D$.  Furthermore, the existence of the morphism $b_D^m$ implies that $\mathcal{F}_m, {a_D^m}^*\mathcal{F}_{n+m}$ are vector bundles twisted by the same class $[\alpha]\in H^2(\on{Pic}^m(C), \mathbb{G}_m)$.  

Let $R_m=\on{coker}(b_D^m)$; by extending scalars to $K$, we see that $R_m$ is an $\alpha$-twisted vector bundle of rank $n$.  Thus $$[\on{Sym}^{n+m}(C)]=[\mathbb{P}({a_D^m}^*\mathcal{F}_{n+m})]=[\mathbb{P}(\mathcal{F}_m)]+\mathbb{L}^{m-g+1}[\mathbb{P}(R_m)]=\mathbb{L}^{n}[\on{Sym}^m(C)]+ [\mathbb{P}(R_m)]$$ by Theorem \ref{motivicdecomposition}.  Observe that $R_m$ and ${a^m_D}^*R_{m+n}$ are $\alpha$-twisted vector bundles of the same rank; thus by Proposition \ref{independence}, $$[\mathbb{P}(R_m)]=[\mathbb{P}({a_D^m}^*R_{m+n})]=[\mbb{P}(R_{m+n})].$$  Let $[P_m]\in K_0(\on{Var}_k)$ be this class.  Then by induction, we have that 
$$[\on{Sym}^{m'+n}(C)]=[P_m]+\mathbb{L}^n[\on{Sym}^{m'}(C)]$$
for all $$m'\equiv m \bmod n,~m'> 2g-2.$$
Thus there exists a polynomial $p(t)\in K_0(\on{Var}_k)[t]$ such that 
$$Z_C(t)=p(t)+\mathbb{L}^nt^nZ_C(t)+\sum_{m=2g-1}^{2g+n-2}\frac{[P_m]t^m}{1-t^n}.$$
In particular, $$(1-\mathbb{L}^nt^n)(1-t^n)Z_C(t)$$ is a polynomial.  As $Z_C(t)$ has constant term $1$, so does $(1-\mathbb{L}^nt^n)(1-t^n)Z_C(t)$.
\end{proof}
\begin{cor}\label{mainthmcor}
Let $C$ be a curve over $k$ such that each irreducible component of  $\widetilde{C_{\on{red}}}$ (the normalization of the underlying reduced curve $C_{\on{red}}$) is geometrically irreducible.  Then there exists a polynomial $p(t)\in 1+tK_0(\on{Var}_k)[t]$ so that  $p(t)Z_C(t)$ is a polynomial with constant term $1$.
\end{cor}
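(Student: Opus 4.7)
The plan is to express $[C]\in K_0(\on{Var}_k)$ as a $\mathbb{Z}$-linear combination of classes of smooth projective geometrically connected curves, to which Theorem \ref{maintheorem} applies, together with classes of zero-dimensional $k$-schemes, and then to invoke multiplicativity of the motivic zeta function. Let $\mathcal{R}\subset 1+tK_0(\on{Var}_k)[[t]]$ denote the multiplicative subgroup of power series expressible as $p(t)/q(t)$ with both $p,q\in 1+tK_0(\on{Var}_k)[t]$; the corollary is exactly the assertion $Z_C\in\mathcal{R}$. Since by Remark \ref{mult} the map $Z$ is a homomorphism from $(K_0(\on{Var}_k),+)$ into $(1+tK_0(\on{Var}_k)[[t]],\cdot)$, it suffices that each summand of the decomposition of $[C]$ has zeta function in $\mathcal{R}$.

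First, $[C]=[C_{\on{red}}]$, since $C_{\on{red}}\hookrightarrow C$ has empty complement. Let $\pi\colon\widetilde{C_{\on{red}}}\to C_{\on{red}}$ be the normalization, an isomorphism outside the zero-dimensional singular locus $Z\subset C_{\on{red}}$; the scissor relations yield $[C_{\on{red}}]=[\widetilde{C_{\on{red}}}]-[\pi^{-1}(Z)]+[Z]$. Writing $\widetilde{C_{\on{red}}}=\bigsqcup_i\widetilde{C_i}$, the hypothesis ensures each component $\widetilde{C_i}$ is geometrically irreducible; let $\overline{\widetilde{C_i}}$ denote its smooth projective model and $B_i=\overline{\widetilde{C_i}}\setminus\widetilde{C_i}$ the zero-dimensional boundary, so that
$$[C]=\sum_i[\overline{\widetilde{C_i}}]-\sum_i[B_i]-[\pi^{-1}(Z)]+[Z].$$
Theorem \ref{maintheorem} applies to each $\overline{\widetilde{C_i}}$, giving $Z_{\overline{\widetilde{C_i}}}\in\mathcal{R}$, and it remains to handle the zero-dimensional terms.

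The main obstacle will be establishing $Z_X\in\mathcal{R}$ for any zero-dimensional $k$-scheme $X$. By multiplicativity this reduces to $X=\on{Spec}(k')$ for a single finite field extension $k'/k$. When $k'/k$ is purely inseparable, $\on{Sym}^n\on{Spec}(k')$ is local Artinian with residue field $k'$ for every $n\geq 1$, so $[\on{Sym}^n\on{Spec}(k')]=[\on{Spec}(k')]$ and one computes
$$Z_{\on{Spec}(k')}(t)=\frac{1+([\on{Spec}(k')]-1)t}{1-t}\in\mathcal{R}.$$
When $k'/k$ is separable of degree $d$, $\on{Sym}^n\on{Spec}(k')$ is finite \'etale, and its connected components correspond to $\on{Gal}(\bar k/k)$-orbits on size-$n$ multisets drawn from the $d$ geometric points of $\on{Spec}(k')$, with residue fields equal to the fixed fields of the stabilizers; a P\'olya-style enumeration refined in $K_0(\on{Var}_k)$ yields an explicit expression for $Z_{\on{Spec}(k')}$ as a quotient of polynomials of the form $\prod_j(1-t^j)$, hence an element of $\mathcal{R}$. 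A general finite extension is then treated by combining these two cases via the tower $k\subset k'_{\on{sep}}\subset k'$, with $k'_{\on{sep}}$ the separable closure of $k$ in $k'$; this combination is the delicate step, since the class of each component of $\on{Sym}^n\on{Spec}(k')$ depends on both the Galois combinatorics coming from $k'_{\on{sep}}/k$ and the residue-field inseparability of $k'/k'_{\on{sep}}$.
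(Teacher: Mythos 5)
Your overall decomposition is essentially the paper's: both reduce to $C_{\mathrm{red}}$, pass to the normalization (disjoint union of smooth affine curves), compactify each component to a smooth projective geometrically connected curve where Theorem~\ref{maintheorem} applies, and absorb all the corrections into zero-dimensional terms via multiplicativity (Remark~\ref{mult}). The paper simply asserts $[C]=[\tilde C]+[X]-[Y]$ with $X,Y$ zero-dimensional and ``leaves it to the reader'' to show that $Z_X, Z_Y$ lie in your $\mathcal{R}$; you try to actually carry this out, which is worthwhile, but the attempt has a concrete error.

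The claim that for a purely inseparable extension $k'/k$ the scheme $\on{Sym}^n\on{Spec}(k')$ is local with residue field $k'$ for every $n\geq1$ is false, and consequently so is your formula $Z_{\on{Spec}(k')}(t)=\bigl(1+([\on{Spec}(k')]-1)t\bigr)/(1-t)$. Take $k=\mathbb{F}_2(t)$ and $k'=k(s)$ with $s^2=t$, and compute $(k'\otimes_k k')^{S_2}$: a $k$-basis for the invariants is $\{1,\,e_1:=s\otimes 1+1\otimes s,\,e_2:=s\otimes s\}$, with $e_1^2=0$, $(e_2-t)^2=0$ and $e_1(e_2-t)=0$ in characteristic~$2$; thus $(k'\otimes_k k')^{S_2}\cong k[e_1,w]/(e_1^2,w^2,e_1w)$, which is local with residue field $k$, not $k'$. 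Hence $[\on{Sym}^2\on{Spec}(k')]=1$, and indeed the correct zeta function in this example is $Z_{\on{Spec}(k')}(t)=(1+[\on{Spec}(k')]\,t)/(1-t^2)$, which still lies in $\mathcal{R}$ but not by your formula. In general, the residue field of $\on{Sym}^n\on{Spec}(k')$ for inseparable $k'/k$ depends on $n$ in a nontrivial way. Beyond this, your separable case is only sketched (``a P\'olya-style enumeration \dots\ yields \dots'') and the mixed case is explicitly left open; so as written the zero-dimensional step --- the one piece the paper omits --- is not actually established. To repair it, one should give an honest computation of $[\on{Sym}^n\on{Spec}(k')]$ for arbitrary finite $k'/k$ (e.g.\ by analyzing orbits of $\on{Gal}(k_{\mathrm{sep}}/k)$ on multisets of geometric points, tracking both the separable residue field and the inseparable multiplicity), rather than asserting a closed form that fails already for $n=2$.
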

\begin{proof}
We reduce to the case $C$ is smooth and projective. Indeed, we may assume $C$ is reduced as $[C]=[C_{\on{red}}]$; let $\tilde C$ be the smooth projective model of $C$.  Then $[C]=[\tilde C]+[X]-[Y]$, where $X, Y$ are zero-dimensional schemes.  In particular $$Z_C(t)Z_Y(t)=Z_{\tilde C}(t)Z_X(t)$$ by Remark \ref{mult}.  We leave it to the reader to show that there exist polynomials $p_X(t), p_Y(t)\in 1+tK_0(\on{Var}_k)[t]$ so that $$p_X(t)Z_X(t), p_Y(t)Z_Y(t)$$ are polynomials with constant term one; thus to prove the theorem for $C$ it suffices to prove it for $\tilde C$.  But $\tilde C$ is a disjoint union of components $C_i$ satisfying the conditions of Theorem \ref{maintheorem}, and $$Z_C(t)=\prod_i Z_{C_i}(t),$$ so we are done.
\end{proof}
\begin{rem}
It is natural to guess that the motivic zeta function of \emph{any} curve is rational; that is, one may drop the condition of geometric connectedness in Theorem \ref{maintheorem}, and the rather artificial hypothesis of Corollary \ref{mainthmcor}.
\end{rem}

\end{document}